\documentclass[11pt]{article}
\usepackage{amsmath,amssymb,amsthm}
\usepackage{hyperref} 
\usepackage{tikz-cd} 
\theoremstyle{definition} 
\newtheorem{definition}{Definition}[section]
\newtheorem{remark}[definition]{Remark}
\theoremstyle{plain}
\newtheorem{theorem}[definition]{Theorem}
\newtheorem{proposition}[definition]{Proposition}
\newtheorem{corollary}[definition]{Corollary}
\newtheorem{lemma}[definition]{Lemma}
\newtheorem{example}[definition]{Example} \title{Generalized Ueda Obstruction Classes and Applications to Non‑semi-positivity of Line Bundles} 
\author{Xiaojun Wu} 
\date{\today} 
\begin{document} 
\def\A{\mathcal{A}}
\def\cI{\mathcal{I}}
\def\Z{\mathbb{Z}}
\def\Q{\mathbb{Q}}  \def\C{\mathbb{C}}
 \def\R{\mathbb{R}}
 \def\N{\mathbb{N}}
 \def\H{\mathbb{H}}
  \def\P{\mathbb{P}}
 \def\rC{\mathrm{C}}
  \def\d{\partial}
 \def\dbar{{\overline{\partial}}}
\def\dzbar{{\overline{dz}}}
\def \ddbar {\partial \overline{\partial}}
\def\cB{\mathcal{B}}
\def\cD{\mathcal{D}}  \def\cO{\mathcal{O}}
\def\cbarO{\overline{\mathcal{O}}}
\def\D{\mathcal{D}}
\def\cC{\mathcal{C}}
\def\cF{\mathcal{F}}
\def \rank{\mathrm{rank}}
\def \deg{\mathrm{deg}}
\def \tot{\mathrm{Tot}}
\def \id{\mathrm{id}}
\bibliographystyle{plain}
\def \End{\mathrm{End}}
\def \dim{\mathrm{dim}}
\def \div{\mathrm{div}}
\def \ker{\mathrm{Ker}}
\def \im{\mathrm{Im}}
\def \rC{\mathrm{Cone}}
\newcommand{\Ub}{\mathcal{U}}
\newcommand{\dcech}{\check{\delta}}
\newcommand{\dcechg}{\delta\!\!\!\check{\delta}}
\newcommand{\lc}{\mathcal{L}}
\newcommand{\ec}{\mathcal{E}}
\maketitle
\begin{abstract}
We introduce a generalized first Ueda obstruction class for a line bundle along a closed analytic subvariety of a complex manifold, allowing singular subvarieties and restrictions that are not unitary flat. Using a Dolbeault resolution, the Chern curvature appears naturally, and non-vanishing of the class obstructs the existence of a smooth Hermitian metric with semi-positive curvature. As an application, we recover several classical examples within a unified framework and provide new examples of nef but not semi-positive line bundles.
We also introduce higher degree analogues of these obstruction classes.
\end{abstract}
\medskip \noindent\textbf{2020 Mathematics Subject Classification.} Primary 32J25; Secondary 14C20. \smallskip \noindent
\\
\textbf{Key words and phrases.} Ueda theory, semi-positive line bundle, Projective bundle.
%Here we introduce a generalized first Ueda obstruction class associated with a holomorphic line bundle. In particular, we establish a variant of \cite[Theorem 1.4]{Koi21}, providing a criterion for non-semi-positivity. The advantage of this construction is that it can be applied in the setting of a closed analytic subvariety of a complex manifold. As an application, we discuss the case of the section at infinity in the compactification of a vector bundle. This completes the analysis of Serre's example, Grauert's example \cite[p. 365–366]{Gra62} and Kim's example \cite[Example 2.14]{Kim07}.
%Note that the non-semi-positivity 
\section{Introduction}
It is a fundamental fact in complex geometry that Kodaira’s vanishing and embedding theorem identifies strictly positive line bundles with ample line bundles: a holomorphic line bundle on a compact complex manifold admits a smooth Hermitian metric with strictly positive curvature if and only if it is ample in the sense of algebraic geometry. This equivalence underlies much of the classical interplay between differential geometry and algebraic geometry.

Motivated by this correspondence, Fujita asked in \cite{Fuj83} whether the weaker algebraic notion of nefness similarly implies the existence of a smooth Hermitian metric with semi-positive curvature.  This informal conjectural expectation was later shown to fail in general: Demailly, Peternell, and Schneider \cite[Example~1.7]{DPS94} constructed explicit examples of nef line bundles on compact complex manifolds that do not admit smooth metrics with semi-positive curvature. 

In \cite{Ued83}, T. Ueda studied neighborhoods of compact complex curves with topologically trivial normal bundle, introducing cohomological invariants, now called Ueda classes, which measure the obstruction to analytically linearizing such neighborhoods. These classes arise from the formal linearization of transition functions along the submanifold and detect when a formally flat structure cannot be integrated analytically. While Ueda’s original work focused on curves, subsequent developments by T. Koike \cite{Koi18,Koi21} extended Ueda theory to higher-codimensional smooth submanifolds with unitary flat normal bundles. In this generalized setting, the obstruction classes have been applied to derive criteria relevant to semi-positivity problems for Hermitian line bundles restricted to these submanifolds \cite[Theorem 1.4]{Koi21}.

Despite these advances, prior formulations of Ueda obstruction classes typically assume that the restriction of the line bundle \(L|_Y\) is unitary flat and that the subvariety \(Y\) is either smooth or has only special singularities, such as a finite number of nodes or cusps \cite{Ued91, Koi17, KU24}.  Moreover, proofs of non-semi-positivity in these contexts have often relied on case-by-case arguments, rather than a unified cohomological criterion.

In this paper we introduce a generalized first Ueda obstruction class associated with a holomorphic line bundle on a smooth complex manifold \(X\) along a reduced analytic subvariety \(Y\subset X\) (see Definition \ref{ueda_first}). In particular, we establish a variant of \cite[Theorem~1.4]{Koi21}, which provides a criterion for non-semi-positivity (see Corollary \ref{koi}). 
More precisely, our main result shows that non-vanishing of the generalized first Ueda obstruction class obstructs the existence of a smooth semi-positive metric.
The advantage of this construction is that it can be defined in the setting of a closed analytic subvariety of a complex manifold without requiring \(Y\) to be smooth or \(L|_Y\) to be unitary flat. 
Since the argument is essentially pointwise, the generalized Ueda obstruction class in fact obstructs semipositivity already at the level of continuous curvature forms; smoothness plays no essential role (see Remark~\ref{rem6}).

As an application, we study projectivizations of vector bundles arising from non-splitting extensions. 
In the case where the quotient bundle has rank one (equivalently, when the associated submanifold is a section), 
we show that a natural twist of the tautological line bundle on $\P(E)$ is not semi-positive (see Theorem~\ref{proj-bund}). 
This phenomenon is specific to the rank-one situation: in higher rank, the analogous statement on $\P(E)$ fails 
in general (see Remark~\ref{rem4}). 
This unifies the analysis of Serre’s example (see  Example \ref{ex4}), Grauert’s example \cite[p.~365--366]{Gra62} (see Example \ref{ex5}), and Kim’s example \cite[Example~2.14]{Kim07}. The non-semi-positivity of the line bundle in Grauert's example follows from a very recent result of Y. Zhang \cite{Zhang25}, while the non-semi-positivity of the anticanonical line bundle in Serre's example was established in \cite[Example 1.7]{DPS94}.
We can also recover some results of \cite[Theorem 1.2]{CH25} (see Example \ref{cao-horing2}).
Our basic observation is the following: given a non-splitting short exact sequence of vector bundles, the generalized first Ueda obstruction class of a suitable twist of the tautological line bundle on the projectivization of the middle term is, up to sign, represented by the second fundamental form of the sequence. In particular, this shows that the obstruction class is non-trivial.

To define the generalized first Ueda obstruction class, we use the logarithmic exact sequence of sheaves of abelian groups on \(X\) and carefully distinguish between the inverse image functor in the category of sheaves of abelian groups and the inverse image functor for \(\mathcal O_X\)-modules. This yields a cohomological invariant
\[
u_1(Y,X,L)\;\in\;H^1\!\bigl(Y,\,i^*\Omega_X^1\bigr),
\]
where \(i\colon Y\hookrightarrow X\) is the inclusion. This generalized class retains information from normal directions that would be lost if one worked only in the category of \(\cO_Y\)-modules. As cohomological invariants taking values in different sheaves, the generalized obstruction class and the classical first Ueda class do not coincide. There is, however, a natural map from the latter to the former (see Remark \ref{rem2}). Importantly, non-vanishing of the generalized class, which appears in the criterion for non-semi-positivity of \(L\), implies non-vanishing of the classical Ueda class.

The novelty of our approach compared to the classical Ueda-theoretic argument lies in the construction of the obstruction class. In the classical setting, the Ueda class is defined via transition functions on a suitable open cover, requiring \v{C}ech cohomology computations, and the role of the Chern class remains somewhat implicit. In contrast, our construction uses a resolution of the relevant sheaf via the Dolbeault complex, so that the Chern curvature of the line bundle naturally provides a globally defined  representative (see Remark \ref{rem1}). This perspective also explains why our generalized first Ueda obstruction class is insensitive to the singularities of the subvariety.

One conceptual advantage of the generalized obstruction class is that its nontriviality provides a simple cohomological criterion for the non-semi-positivity of $L$, thereby unifying and extending the various case-by-case arguments for projective bundles discussed above. In particular, for projectivizations of vector bundles, differential-geometric calculations show that the Chern curvature representative of the generalized obstruction class can be identified uniformly with the second fundamental form of the corresponding vector bundle extension. Its nontriviality therefore obstructs the existence of a semi-positive Hermitian metric by Corollary \ref{koi}.
Most previously known results are either restricted to surfaces or rely on sophisticated tools such as positivity of direct images.

Our construction also produces a systematic family of examples in the non-K\"ahler setting (see Corollary \ref{nonkah-ex}). More precisely, starting from a compact complex parallelisable manifold $R$ with $H^{0,1}(R)\neq 0$, one can consider non-trivial extensions
$$0 \to \Omega^1_R \to E \to \cO_R \to 0$$
and the associated projectivised bundle $X:=\mathbb{P}(E)$ endowed with the tautological line bundle $L:=\cO_{\mathbb{P}(E)}(1)$. In this situation, $L$ is nef, while it does not admit any smooth metric with semi-positive Chern curvature. This yields a general family of nef line bundles over non-K\"ahler manifolds exhibiting a strong discrepancy between numerical effectiveness and curvature positivity. To the author’s knowledge, such examples do not seem to have been explicitly constructed in the literature.

At the end of the paper, based on the breakthrough of \cite{AL19} on a Dolbeault--Grothendieck lemma over non-reduced analytic schemes, we define generalized higher degree Ueda obstruction classes (see Definition \ref{ueda}). 
This yields cohomological invariants for any \(k \geq 1\),
\[
u_k(Y,X,L)\;\in\;H^1\!\bigl(Y,\,i^{-1}\Omega_X^1\otimes_{i^{-1} \cO_X} \cO_X/\cI_{Y}^k \bigr),
\]
where \(i\colon Y\hookrightarrow X\) is the inclusion.
In particular, we obtain a variant of \cite[Theorem 3.7]{Koi21} (Corollary \ref{2nd-ueda}), which provides another criterion for the non-semi-positivity of a holomorphic line bundle in terms of generalized Ueda obstruction classes.
Compared with the construction in \cite[Section 3.2]{Koi21} (in particular \cite[Lemma 3.1]{Koi21}), our generalized Ueda obstruction classes are always well defined, independently of the choice of local coordinates or local trivializations of the line bundle.
Moreover, these generalized Ueda obstruction classes also admit representatives induced by the Chern curvature (see Remark \ref{curv-rep}).
Moreover, there exists a natural linear map sending the Ueda obstruction classes defined in \cite[Section 3.2]{Koi21} to our classes whenever the former are well defined (see Remark \ref{coincide} for the precise statement).

The paper is organized as follows. In Section~2 we construct the generalized first Ueda obstruction class and compare it with the classical case. Then we prove the variant of the non-semi-positivity criterion. In Section~3 we apply this criterion to projectivized vector bundles and complete the analysis of examples.
We divide this section into two subsections.
The first subsection is devoted to Theorem \ref{proj-bund}; we also present several applications, including new proofs of classical examples and new examples of nef but not semi-positive line bundles.
The second subsection discusses two possible generalizations to the higher-rank case: one fails, while the other does not yield interesting examples. We also present a variant of \cite[Theorem 1.2]{CH25}.
In Section 4, we introduce the definition of generalized higher degree Ueda obstruction classes and establish another variant of the non-semi-positivity criterion.
 \paragraph{}
\textbf{Acknowledgements.} I would like to thank my postdoctoral advisor, Professor Takayuki Koike, and Dr.~Yangyang Zhang for several helpful and stimulating discussions related to this work. This research was supported by the JSPS Postdoctoral Fellowships for Research in Japan (Standard). I am also grateful to Osaka Metropolitan University and the Osaka Central Advanced Mathematical Institute (OCAMI) for providing an excellent research environment.

\section{Construction of generalized first Ueda obstruction class}
We now turn to the construction of the generalized first Ueda obstruction class.
The general setting is as follows. Let \(X\) be a smooth complex manifold and let \(Y \subset X\) be a reduced (not necessarily smooth) analytic subvariety. Let \(L\) be a holomorphic line bundle on \(X\), and denote by \[ i \colon Y \hookrightarrow X \] the closed embedding. Our objective is to define a generalized first Ueda obstruction class \[ u_1(Y,X,L) \;\in\; H^1\!\left(Y,\, i^*\Omega_X^1\right). \] In contrast with the framework of \cite{Koi21}, we impose neither a smoothness assumption on \(Y\) nor a Hermitian flatness condition on \(L|_Y\).

Consider the exact sequence in the category of sheaves of abelian groups 
\[ 0 \;\longrightarrow\; \mathbb C^{*} \;\longrightarrow\; \mathcal O_X^{*} \;\xrightarrow{\;d\log\;} d\mathcal O_X \;\longrightarrow\; 0, \] 
where \(d\mathcal O_X \subset \Omega_X^{1}\) denotes the subsheaf of exact holomorphic \(1\)-forms, and the morphism \[ d\log \colon \mathcal O_X^{*} \longrightarrow d\mathcal O_X \] is given by the logarithmic differential \(f \mapsto \tfrac{df}{f}\).

Denote by \(i^{-1}\) the inverse image functor on the category of sheaves of abelian groups. Applying \(i^{-1}\) to the logarithmic exact sequence on \(X\), we obtain a short exact sequence of sheaves of abelian groups on \(Y\): \[ 0 \;\longrightarrow\;  i^{-1}\C^{*}_X \;\longrightarrow\; i^{-1}\mathcal O_X^{*} \;\xrightarrow{\;d\log\;} i^{-1} d\mathcal O_X \;\longrightarrow\; 0 . \]
Because \(Y\) is closed in \(X\), we have \(i^{-1}\mathbb{C}_X^* \simeq \mathbb{C}_Y^*\).

Let \(L\) be a holomorphic line bundle. By restricting its transition functions to \(Y\), the line bundle \(L\) determines a cohomology class \[ [L] \in H^{1}\!\left(Y,\; i^{-1}\mathcal O_X^{*}\right). \] A coordinate-free description is obtained as follows. In the category of sheaves of abelian groups on \(X\), there is a canonical morphism of sheaves \[ \mathcal O_X^{*} \longrightarrow i_* i^{-1}\mathcal O_X^{*}, \] induced by the adjunction between \(i^{-1}\) and \(i_*\). Passing to cohomology, this yields a natural map \[ H^{1}\!\left(X,\; \mathcal O_X^{*}\right) \longrightarrow H^{1}\!\left(X,\; i_* i^{-1}\mathcal O_X^{*}\right). \] Since \(Y\) is closed in \(X\), the functor \(i_*\) is exact in the category of sheaves of abelian groups. Therefore, the adjunction isomorphism induces a canonical identification \[ H^{1}\!\left(Y,\; i^{-1}\mathcal O_X^{*}\right) \;\simeq\; H^{1}\!\left(X,\; i_* i^{-1}\mathcal O_X^{*}\right). \] Under this identification, the class \([L]\) is precisely the image of the class of \(L\) in \(H^{1}(X,\mathcal O_X^{*})\) via the natural morphism above.

We emphasize that the reader should be careful about the choice of category. If one works in the category of $\cO_Y-$module sheaves, then tensoring with the structure sheaf of a submanifold $Y$ may, in a certain sense, result in the loss of information coming from the normal direction.
Here \(i^{-1}\) denotes the inverse image functor for sheaves of abelian groups, while \(i^*\) denotes the inverse image functor for \(\cO_X\)-modules (i.e. $i^* \cF:=i^{-1} \cF \otimes_{i^{-1} \cO_X} \cO_Y$).
The natural morphism $i^{-1}\mathcal O_X \to \mathcal O_Y$ induces a canonical morphism
$i^{-1}\cF \to i^*\cF$ for any $\cO_X$-module $\cF$.

\begin{definition}
\label{ueda_first}
We define the generalized first Ueda obstruction class \[ u_1(Y,X,L) \;\in\; H^{1}\!\left(Y,\;i^{*}\Omega_X^{1}\right) \] to be the image of \([L]\) under the composition \[ H^{1}\!\left(Y,\; i^{-1}\mathcal O_X^{*}\right) \;\xrightarrow{\;d\log\;} H^{1}\!\left(Y,\; i^{-1} d\mathcal O_X\right) \;\longrightarrow\; H^{1}\!\left(Y,\; i^{-1}\Omega_X^{1}\right) \;\xrightarrow{\;} H^{1}\!\left(Y,\; i^{*}\Omega_X^{1}\right), \] 
where: \(d\log\) is induced by the logarithmic differential \(f \mapsto \tfrac{df}{f}\);  the second arrow is the natural inclusion \(d\mathcal O_X \hookrightarrow \Omega_X^{1}\);  \(i^{*}\) denotes the inverse image functor in the category of \(\mathcal O_Y\)-modules. 
\end{definition}
For the convenience of the reader, we recall the construction of \(d\log\) in Definition \ref{ueda_first}.
Note that there is a canonical morphism of sheaves \[ \mathcal O_X^{*} \longrightarrow i_* i^{-1}\mathcal O_X^{*}, \] which arises from the natural transformation between the identity functor and the functor \(i_* i^{-1}\) in the category of sheaves of abelian groups. In particular, applying the logarithmic differential yields an induced morphism of sheaves \[ d\log \colon i_* i^{-1}\mathcal O_X^{*} \longrightarrow i_* i^{-1}\Omega_X^{1}. \] 

Now let us compare our definition with that of \cite{Koi21}.

As in the setting of \cite{Koi21},
assume that \(Y\) is smooth and that the restriction \(L|_Y\) is a unitary flat holomorphic line bundle.

 Consider the canonical exact sequence of locally free sheaves on \(Y\) \[ 0 \;\longrightarrow\; N^{-1}_{Y/X} \;\longrightarrow\; i^{*}\Omega_X^{1} \;\longrightarrow\; \Omega_Y^{1} \;\longrightarrow\; 0 . \] Since \(L|_Y\) is unitary flat, its first Chern class vanishes, and hence the image of \(u_1(Y,X,L)\) in \(H^{1}\!\left(Y,\Omega_Y^{1}\right)\) is zero. Indeed, this follows from the commutative diagram \[ \begin{tikzcd} H^{1}\!\left(Y,\, i^{-1}\mathcal O_X^{*}\right) \arrow[r] \arrow[d] & H^{1}\!\left(Y,\, \mathcal O_Y^{*}\right) \arrow[d, "c_1"] \\ H^{1}\!\left(Y,\, i^{*}\Omega_X^{1}\right) \arrow[r] & H^{1}\!\left(Y,\, \Omega_Y^{1}\right), \end{tikzcd} \] since \(c_1(L|_Y)=0\). 
 
 Consequently, the class \(u_1(Y,X,L)\) lies in the kernel of the natural map \[ H^{1}\!\left(Y, i^{*}\Omega_X^{1}\right) \;\longrightarrow\; H^{1}\!\left(Y,\Omega_Y^{1}\right), \] and therefore belongs to \[ \mathrm{Im}\!\left( H^{1}\!\left(Y, N^{-1}_{Y/X}\right) \;\longrightarrow\; H^{1}\!\left(Y, i^{*}\Omega_X^{1}\right) \right). \] In general, the natural map \(H^{1}\!\left(Y, N^{-1}_{Y/X}\right) \to H^{1}\!\left(Y, i^{*}\Omega_X^{1}\right)\) is not injective, so a lift in $H^{1}\!\left(Y, N^{-1}_{Y/X}\right)$ is not unique. However, by the explicit definition of the Ueda class as a \v{C}ech cocycle given by the logarithmic differentials of the transition functions (see, for example, \cite[Section 2.2]{Koi21}), the same formula defines a well-defined element of \(H^{1}\!\left(Y, N^{-1}_{Y/X}\right)\) which thus provides a lift. For this reason, we refer to the class \[ u_1(Y,X,L) \;\in\; \mathrm{Im}\!\left( H^{1}\!\left(Y, N^{-1}_{Y/X}\right) \;\longrightarrow\; H^{1}\!\left(Y, i^{*}\Omega_X^{1}\right) \right) \] as the generalized first Ueda obstruction class. This definition remains useful in view of the following result \cite[Theorem 1.4]{Koi21}. \begin{theorem}
 \label{koike_thm} Let \(X\) be a complex manifold, \(Y \subset X\) a compact K\"ahler submanifold, and \(L\) a line bundle on \(X\) such that \(L|_Y\) is topologically trivial. If the first Ueda obstruction class is nontrivial, then \(L\) is not semi-positive. \end{theorem}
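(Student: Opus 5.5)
We argue by contraposition. Suppose $L$ carries a smooth Hermitian metric $h$ with $\Theta_h(L)\ge 0$; we show that the first Ueda obstruction class vanishes. Since $Y$ is compact K\"ahler and $L|_Y$ is topologically trivial, $c_1(L|_Y)=0$ in $H^2(Y,\R)$. Hence $\int_Y \Theta_h(L)|_Y\wedge\omega_Y^{n-1}=0$, where $n=\dim Y$ and $\omega_Y$ is a K\"ahler form on $Y$. The integrand is a semi-positive $(1,1)$-form wedged with $\omega_Y^{n-1}$, so it is pointwise nonnegative. Therefore its trace with respect to $\omega_Y$ vanishes identically, and so $\Theta_h(L)|_{TY}\equiv 0$ as a form on $TY$.

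The key step is to upgrade this tangential vanishing to the vanishing of the mixed components. At a point $y\in Y$, write $\Theta=\Theta_h(L)(y)$ as a semi-positive Hermitian form on $T_yX$. If $\Theta(v,\bar v)=0$ for every $v\in T_yY$, then by Cauchy--Schwarz for semi-positive Hermitian forms, $|\Theta(v,\bar w)|^2\le \Theta(v,\bar v)\,\Theta(w,\bar w)=0$. So $\Theta(v,\cdot)=0$ for all $v\in T_yY$ and all $w\in T_yX$. In other words, the pullback $i^*\Theta_h(L)$, viewed as a section of $i^*\Omega^{1}_X\otimes \overline{\Omega^1_Y}$ (the $(1,0)$-part in $X$-directions and the $(0,1)$-part along $Y$), vanishes identically on $Y$. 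This pointwise argument is the heart of the proof. It is also why only continuity of the curvature is really needed, since everything happens at the level of pointwise linear algebra.

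It remains to identify the Ueda class with the Dolbeault class of this mixed form. We use the resolution of $i^*\Omega^1_X$ by the fine sheaves of smooth sections of $i^*\Omega^1_X\otimes\mathcal{A}^{0,q}_Y$ with $\dbar$. Choose local frames $e_j$ of $L$ on an open cover $U_j$ with transition functions $g_{jk}$, and set $\varphi_j=\log|e_j|_h^2$. Then $\varphi_j-\varphi_k=\log|g_{kj}|^2$, and the $(1,0)$-forms $\d\varphi_j$ satisfy $\d\varphi_j-\d\varphi_k=d\log g_{kj}$. This is exactly the Čech cocycle defining the class in Definition \ref{ueda_first}, up to sign. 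By the standard Čech--Dolbeault zigzag, the class $u_1$ is therefore represented, up to sign, by the global form $\dbar\d\varphi_j|_Y$, restricted as a section of $i^*\Omega^1_X\otimes\mathcal{A}^{0,1}_Y$. This form equals $-\Theta_h(L)$ up to a constant, taken with its $(1,0)$-part in $X$-directions and its $(0,1)$-part along $Y$.

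By the previous paragraph this representative vanishes, so $u_1(Y,X,L)=0$ in $H^1(Y,i^*\Omega^1_X)$. The statement, however, concerns the Koike lift in $H^1(Y,N^{-1}_{Y/X})$. Nonvanishing of the image in $H^1(Y,i^*\Omega^1_X)$ is the weaker hypothesis, and this is the version we actually prove. To handle Koike's class itself, we use that $i^*\Theta_h(L)$ vanishes, including its tangential $(1,0)$-part. We may then adjust $\varphi_j$ so that $\d\varphi_j|_Y$ takes values in conormal directions, and run the same zigzag in the resolution of $N^{-1}_{Y/X}$. This last refinement, that is, the choice of conormal-valued potentials using unitary flatness of $L|_Y$, is the step I expect to be the main technical obstacle.
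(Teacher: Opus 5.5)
\textbf{There is a genuine gap: you prove only the paper's weaker variant, and the step that gives the stated theorem is left open.}

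Your first three paragraphs are correct. They are essentially the paper's proof of Corollary~\ref{koi}, with three substitutions: a K\"ahler form instead of a Gauduchon metric, Cauchy--Schwarz instead of the block-matrix argument, and an explicit \v{C}ech--Dolbeault zigzag instead of the morphism of soft resolutions $i^{-1}\mathcal{E}^{1,\bullet}_X\to i^{*}\mathcal{E}^{1,\bullet}_X$. But this only shows that the \emph{image} of the first Ueda class in $H^1(Y,i^*\Omega^1_X)$ vanishes. Theorem~\ref{koike_thm} is about Koike's class in $H^1(Y,N^{-1}_{Y/X})$. The map $H^1(Y,N^{-1}_{Y/X})\to H^1(Y,i^*\Omega^1_X)$ is not injective in general: its kernel is the image of the connecting map $H^0(Y,\Omega^1_Y)\to H^1(Y,N^{-1}_{Y/X})$, and the paper stresses this non-injectivity.

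You also have the implication backwards when you call nonvanishing in $H^1(Y,i^*\Omega^1_X)$ the weaker hypothesis. It is the stronger one, so what you establish is a weaker theorem. That is why the paper presents Corollary~\ref{koi} as a variant that follows from \cite{Koi21}, and does not reprove Theorem~\ref{koike_thm} itself. The step you defer as the ``main technical obstacle'' is therefore the whole content beyond Corollary~\ref{koi}, and as written it is not carried out.

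The missing idea is to normalize the frames using the flat metric you already have.
\begin{enumerate}
\item Your first paragraph gives $\Theta_h(L)|_{TY}\equiv 0$, so $h|_Y$ is flat. On small balls $U_j$ you can then write $\log|e'_j|^2_h|_{U_j}=2\,\mathrm{Re}\,g_j$ with $g_j\in\mathcal{O}(U_j)$.
\item Replace $e'_j$ by $e_j:=e^{-\tilde g_j}e'_j$, where $\tilde g_j$ is a holomorphic extension of $g_j$ to $V_j$ (for instance, constant in the normal coordinates). Then $|e_j|_h\equiv 1$ on $U_j$.
\item The transition functions on $Y$ are now holomorphic of modulus one, hence constants $t_{jk}\in\mathrm{U}(1)$. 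So this is an admissible system for Koike's definition. It realizes the given unitary flat structure, because two flat metrics on $L|_Y$ differ by a pluriharmonic, hence locally constant, function on the compact $Y$.
\item Set $\varphi_j:=\log|e_j|^2_h$, which vanishes on $U_j$. Let $s_j$ be the section of $i^*\Omega^1_X$ given by $\partial\varphi_j$ at points of $U_j$. It annihilates $TY$, since its pullback is $\partial(\varphi_j|_{U_j})=0$, so it is a smooth section of $N^{-1}_{Y/X}$.
\item In coordinates $(z,w)$ with $Y=\{w=0\}$, $\bar\partial s_j$ has coefficients $\partial^2\varphi_j/\partial w^\lambda\partial\bar z^\mu$ at $w=0$. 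This is exactly the mixed block your Cauchy--Schwarz step kills, so $s_j\in H^0(U_j,N^{-1}_{Y/X})$.
\item Write $e_k=t_{jk}\,g\,e_j$ with $g=1+\sum_\lambda f_{kj,\lambda}(z_j)\,w_j^\lambda+O(|w_j|^2)$. Then $s_k-s_j=(dg/g)|_{Y}=\sum_\lambda f_{kj,\lambda}\,dw_j^\lambda$, which is Koike's cocycle.
\end{enumerate}
Hence $u_1$ is a \v{C}ech coboundary with values in $N^{-1}_{Y/X}$ itself. No Dolbeault resolution is needed, and with this step your argument proves the full statement. This is essentially Koike's original argument.
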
 

Here we present a variant of Theorem~\ref{koike_thm}, which can follow directly from that result. Since our argument naturally involves the Chern curvature and is global in nature, we include the details for completeness.
We will see later that the same argument extends to singular subvarieties and to higher-degree obstruction classes. 

For later use, we recall several results concerning the (derived) category of sheaves of abelian groups.
A comprehensive reference for this material is \cite{KS02}.
\begin{definition}
\label{soft_sheaf}
Let \(X\) be a topological space. A sheaf \(\mathcal F\) of abelian groups on \(X\) is called soft if for every closed subset \(Z \subset X\) with inclusion $i_Z: Z \to X$, the restriction map \[ \Gamma(X,\mathcal F)\;\longrightarrow\;\Gamma(Z,i_Z^{-1}\mathcal F) \] is surjective; that is, every section of \(\mathcal F\) defined on a closed subset \(Z\) extends to a global section on \(X\).
\end{definition}
A standard verification shows that every fine sheaf is soft. Moreover, if \(W \subset X\) is a closed subset and \(\mathcal F\) is a soft sheaf on \(X\), then its inverse image \(i^{-1}_W \mathcal F\) is a soft sheaf on \(W\).

Recall the following standard lemma.
\begin{lemma}(cf. e.g.\cite[Exercise II.5]{KS02})
\label{soft-lem}
Let \(X\) be a paracompact space and let \(\mathcal F\) be a soft sheaf on \(X\). Then \[ H^i(X,\mathcal F)=0 \qquad \text{for all } i>0 . \] In particular, if \(\mathcal F^\bullet\) is a bounded complex of soft sheaves on \(X\), its hypercohomology is computed by the complex of global sections \(\Gamma(X,\mathcal F^\bullet)\).
\end{lemma}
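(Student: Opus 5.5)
We prove Lemma~\ref{soft-lem} in two steps: first show that softness is inherited by cokernels, then resolve a soft sheaf by flabby ones and read off its cohomology from global sections.

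\textbf{Step 1 (cokernels).} Let
\[
0\to\mathcal F'\to\mathcal F\to\mathcal F''\to 0
\]
be exact on the paracompact space \(X\), with \(\mathcal F'\) soft. We claim that \(\Gamma(X,\mathcal F)\to\Gamma(X,\mathcal F'')\) is surjective. Take \(s''\in\Gamma(X,\mathcal F'')\) and choose a locally finite cover of \(X\) by closed sets \(Z_\alpha\), each with a local lift \(s_\alpha\in\Gamma(Z_\alpha,\mathcal F)\) of \(s''|_{Z_\alpha}\). Well-order the index set and consider pairs \((Z,s)\), where \(Z\) is a union of some \(Z_\alpha\) and \(s\) is a lift of \(s''\) on \(Z\). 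We extend such pairs transfinitely. On \(Z\cap Z_\beta\) the difference \(s-s_\beta\) is a section of \(\mathcal F'\) over a closed set. By softness of \(\mathcal F'\) it extends to a global section \(t\), and we replace \(s_\beta\) by \(s_\beta+t|_{Z_\beta}\). The modified section glues with \(s\) on \(Z\cup Z_\beta\). Local finiteness guarantees that unions along chains remain closed and that the glued sections are well defined, so Zorn's lemma applies.

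We use this in two ways.
\begin{itemize}
\item Applying the claim to restrictions to closed sets \(Z\) shows that if \(\mathcal F'\) and \(\mathcal F\) are soft, then \(\mathcal F''\) is soft.
\item Applying it directly shows that \(H^0\) is exact on such sequences.
\end{itemize}

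\textbf{Step 2 (vanishing).} Embed \(\mathcal F\) in its Godement sheaf of discontinuous sections, \(\mathcal F\hookrightarrow\mathcal G\). The sheaf \(\mathcal G\) is flabby, hence soft, and it is acyclic. Set \(\mathcal Q=\mathcal G/\mathcal F\). By Step 1, \(\mathcal Q\) is soft and \(\Gamma(\mathcal G)\to\Gamma(\mathcal Q)\) is surjective. The long exact sequence then gives
\[
H^1(X,\mathcal F)=0 \qquad\text{and}\qquad H^{i}(X,\mathcal F)\simeq H^{i-1}(X,\mathcal Q)\quad (i\ge 2).
\]
Since \(\mathcal Q\) is again soft, induction on \(i\) yields \(H^i(X,\mathcal F)=0\) for all \(i>0\).

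\textbf{Consequence for complexes.} The hypercohomology statement is the standard fact that a bounded complex of \(\Gamma\)-acyclic sheaves computes \(R\Gamma\). One can see this by comparing with a Cartan--Eilenberg resolution, or by a spectral sequence whose \(E_1\) page is concentrated in row \(0\).

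The main obstacle is the gluing in Step 1, which needs care with the closed cover and local finiteness. Here paracompactness is used to produce a locally finite closed refinement.
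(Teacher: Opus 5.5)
Your proof is correct. The paper gives no argument of its own and only refers to Kashiwara--Schapira, and what you wrote is essentially the standard textbook proof behind that citation: lifting through a locally finite closed cover by Zorn's lemma, the two-out-of-three property for softness, then dimension shifting along the Godement embedding.
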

We now reinterpret the generalized Ueda obstruction class in terms of curvature via the Dolbeault resolution and give the following variant of \cite[Theorem 1.4]{Koi21}.
 \begin{corollary}
\label{koi}
 Let \(X\) be a complex manifold, \(Y \subset X\) a compact submanifold, and \(L|_Y\) a unitary flat line bundle on \(Y\). If the generalized first Ueda obstruction class is nontrivial, then \(L\) is not semi-positive. \end{corollary}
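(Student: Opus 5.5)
We argue by contraposition: assume \(L\) carries a smooth Hermitian metric \(h\) with \(\Theta := i\Theta_h(L)\ge 0\), and show that \(u_1(Y,X,L)=0\). First we give a Dolbeault representative of \(u_1\). The sheaf \(i^*\Omega^1_X\) is locally free on \(Y\) because \(Y\) is smooth. It has the soft resolution
\[
i^*\Omega^1_X \to i^*\mathcal{A}^{1,0}_X \xrightarrow{\dbar} i^*\mathcal{A}^{1,1}_X \xrightarrow{\dbar} \cdots,
\]
the Dolbeault complex of the holomorphic bundle \(T^*X|_Y\) on \(Y\). Exactness follows from the Dolbeault–Grothendieck lemma on \(Y\) applied to the holomorphic vector bundle \(T^*X|_Y\). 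Softness holds because these sheaves are fine. By Lemma \ref{soft-lem}, \(H^1(Y,i^*\Omega^1_X)\) is computed by global sections.

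Next we compute the representative. Take local frames \(e_\alpha\) on a cover \(U_\alpha\), write \(|e_\alpha|_h^2=e^{-\varphi_\alpha}\), and let \(g_{\alpha\beta}\) be the transition functions. The Čech cocycle \(d\log g_{\alpha\beta}\) equals \(\partial\varphi_\beta-\partial\varphi_\alpha\) up to sign. So it is the Čech coboundary of the smooth \((1,0)\)-forms \(\partial\varphi_\alpha\). The standard Čech–Dolbeault zig-zag therefore shows that, up to a universal constant, \(u_1(Y,X,L)\) is represented by the global section \(\dbar\partial\varphi_\alpha|_Y = \Theta|_Y\). Here \(\Theta|_Y\) is viewed as a section of \(i^*\mathcal{A}^{1,1}_X\), i.e.\ as a smooth section over \(Y\) of \(T^{*1,0}X|_Y\otimes T^{*0,1}Y\). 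This means we keep the \(dz\) components in normal directions but restrict the \(d\bar z\) part to \(Y\). The key point is that this restriction is the \(\mathcal{O}\)-module pullback \(i^*\), not the full ambient form.

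The main step, and the main obstacle, is to show that semi-positivity forces this class to vanish. Since \(L|_Y\) is unitary flat, \(c_1(L|_Y)=0\) in \(H^{1,1}(Y)\). Hence the pullback form \(\Theta_{Y}:=\iota^*\Theta\), a semipositive real \((1,1)\)-form on \(Y\), is \(d\)-exact on compact \(Y\).

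We first try to conclude that \(\Theta_Y\equiv 0\). Choose a unitary flat metric \(h_0\) on \(L|_Y\) and write \(h|_Y=h_0e^{-\psi}\). Then \(\Theta_Y=i\partial\bar\partial\psi\ge 0\), so \(\psi\) is plurisubharmonic on the compact manifold \(Y\). Therefore \(\psi\) is constant on each component, and \(\Theta_Y=0\). This step avoids any Kähler hypothesis.

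Then we use pointwise linear algebra. For a semipositive Hermitian form \(\Theta\) at a point \(y\in Y\), vanishing on \(T_yY\times T_yY\) implies \(\Theta(v,w)=0\) for all \(v\in T_yX\), \(w\in T_yY\), by Cauchy–Schwarz: \(|\Theta(v,w)|^2\le\Theta(v,v)\Theta(w,w)\). The mixed components are exactly the remaining data in \(\Theta|_Y\in T^{*1,0}X|_Y\otimes T^{*0,1}Y\). Hence the representative vanishes identically as a section of \(i^*\mathcal{A}^{1,1}_X\).

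We conclude that \(u_1(Y,X,L)=0\), contradicting the hypothesis. The delicate points to check are the sign and constant bookkeeping in the zig-zag, and the identification of \(i^*\mathcal{A}^{1,1}_X\) with mixed-type forms, in particular that the \(\dbar\) on \(Y\) only sees \(d\bar z\) directions tangent to \(Y\). Since this argument is pointwise, it should extend verbatim to continuous metrics and later to singular \(Y\).
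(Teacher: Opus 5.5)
Your proof is correct. It has the same skeleton as the paper's: the curvature gives a Dolbeault representative of $u_1$, the tangential block of $\Theta$ along $Y$ vanishes, and Cauchy--Schwarz then kills the mixed block. Two steps, however, are carried out differently.

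\textbf{The representative.} The paper obtains it functorially:
\begin{itemize}
\item by Chern--Weil, $\Theta(L,h)$ represents $c_1(L)$;
\item its stalks along $Y$ represent the image of $c_1(L)$ in $H^1(Y,i^{-1}\Omega^1_X)$, computed via the soft resolution $i^{-1}\mathcal{E}^{1,\bullet}_X$;
\item the morphism of complexes $i^{-1}\mathcal{E}^{1,\bullet}_X\to i^{-1}\Omega^1_X\otimes_{i^{-1}\mathcal{O}_X}\mathcal{E}^{0,\bullet}_Y$ carries this to $u_1$.
\end{itemize}
You instead run the \v{C}ech--Dolbeault zig-zag directly in the target resolution, using the $0$-cochain $\{\partial\varphi_\alpha|_Y\}$. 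The two are equivalent. The paper's formulation is what later lets it swap $\mathcal{E}^{0,\bullet}_Y$ for the current-valued complexes used for singular $Y$ and for the infinitesimal neighbourhoods $Y^{(i)}$.

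\textbf{Vanishing of $\Theta_Y$.} The paper integrates against $\omega^{\dim Y-1}$ for a Gauduchon metric $\omega$. You apply the maximum principle to the psh function $\psi$ defined by $h|_Y=h_0e^{-\psi}$. Your route is more elementary, since it needs no Gauduchon theorem. It also makes explicit that what is really used is the $\partial\bar\partial$-exactness $\Theta_Y=\sqrt{-1}\,\partial\bar\partial\psi$ coming from the flat metric, which the Gauduchon argument needs implicitly as well. The $d$-exactness you mention in passing would not suffice on non-K\"ahler $Y$: on a Hopf surface, $\sqrt{-1}\,\partial\bar\partial\log|z|^2$ descends to a nonzero, semi-positive, $d$-exact form. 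Since you never use that remark, this causes no gap.

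\textbf{The final step.} You are slightly more precise than the paper here. Only the components lying in $T^{*1,0}X|_Y\otimes T^{*0,1}Y$ are forced to vanish. The full stalk of $\Theta$ along $Y$ need not vanish, because the normal--normal block survives, but that block is discarded by the projection.

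\textbf{Notation.} The literal $\mathcal{O}$-module pullback $i^{-1}\mathcal{A}^{1,\bullet}_X\otimes_{i^{-1}\mathcal{O}_X}\mathcal{O}_Y$ only divides by the holomorphic ideal of $Y$. It therefore still retains $\bar v$-dependence and $d\bar v$-components. The resolution you actually use should be written $i^{-1}\Omega^1_X\otimes_{i^{-1}\mathcal{O}_X}\mathcal{E}^{0,\bullet}_Y$, the Dolbeault complex of $T^*X|_Y$, exactly as in the paper.
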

 \begin{proof}
We prove by contradiction.
Assume that there exists a smooth metric $(L,h)$ such that the Chern curvature $\Theta(L,h)$ is semi-positive. 
 
Consider the commutative diagram \[ \begin{tikzcd} H^{1}\!\left(X,\, \mathcal O_X^{*}\right) \arrow[r] \arrow[d, "c_1"] & H^{1}\!\left(Y,\, i^{-1}\mathcal O_X^{*}\right) \arrow[d] \\ H^{1}\!\left(X,\, \Omega_X^{1}\right) \arrow[r] & H^{1}\!\left(Y,\, i^{-1}\Omega_X^{1}\right), \end{tikzcd} \]
By the Chern-Weil theorem,  $\Theta(L,h)$
represents the first Chern class of $L$ via the fine resolution \(\mathcal E_X^{1,\bullet}\) defined as follows.
Consider the bounded complex \(\mathcal E^{p,\bullet}_X\) of sheaves of smooth differential forms on \(X\), which provides a fine resolution of the holomorphic \(p\)-form sheaf \(\Omega^p_X\).
(We stress that the smoothness assumption on \(X\) is essential for the argument. This is the reason why we assume \(X\) to be smooth throughout the paper.)

Consider the bounded complex \(i^{-1}\mathcal E^{p,\bullet}_X\) obtained from sheaves of smooth differential forms on \(X\), which furnishes a soft resolution of the sheaf \(i^{-1}\Omega^p_X\). By Lemma~\ref{soft-lem},  the cohomology group $H^1(Y, i^{-1}\Omega^p_X)$ can be computed by the complex of global sections of \(i^{-1}\mathcal E^{p,\bullet}_X\).

Note that \(i^{-1}\mathcal E^{p,\bullet}_X\) is not a module over \(\mathcal E^{0,0}_Y\); consequently, it does not constitute a fine resolution.

The collection of stalks \(\{\Theta(L,h)(z)\}_{z\in Y}\) glues to a global section in \( \Gamma\bigl(Y, i^{-1}\mathcal E^{1,1}_X\bigr), \) 
which
via the above soft resolution, represents the image of \(c_1(L)\in H^{1}(X,\Omega^1_X)\) in \( H^{1}\bigl(Y, i^{-1}\Omega^1_X\bigr). \) This section should not be confused with \(i^*\Theta(L,h)\), which represents a class in \(H^{1}(Y,\Omega^1_Y)\).

Since \(i^{-1}\Omega^1_X\) is a locally free \(i^{-1}\mathcal O_X\)-module, the bounded complex \[ i^{-1}\Omega^1_X \otimes_{i^{-1}\mathcal O_X} \mathcal E^{0,\bullet}_Y \] is a fine resolution of \[ i^{-1}\Omega^1_X \otimes_{i^{-1}\mathcal O_X} \mathcal O_Y \;=\; i^*\Omega^1_X, \] because \(\mathcal E^{0,\bullet}_Y\) is a fine resolution of \(\mathcal O_Y\).

Note that there is a natural identification \[ i^{-1}\mathcal E^{1,\bullet}_X \;\simeq\; i^{-1}\Omega^1_X \otimes_{i^{-1}\mathcal O_X} i^{-1}\mathcal E^{0,\bullet}_X . \] Via this identification, the canonical morphism \[ i^{-1}\mathcal E^{0,\bullet}_X \longrightarrow \mathcal E^{0,\bullet}_Y \] induces a morphism of complexes \[ i^{-1}\mathcal E^{1,\bullet}_X \longrightarrow i^{*}\mathcal E^{1,\bullet}_X . \]
Via this resolution, this morphism of complexes induces the map in hypercohomology such that the image of \(\Theta(L,h)\) represents the generalized first Ueda obstruction class in \( H^{1}\bigl(Y, i^{*}\Omega^1_X\bigr). \)

Note that, up to this point, we have only used the assumptions that \(Y\) is closed, \(X\) is smooth, and \(L\) is a holomorphic line bundle over \(X\).

By contradiction, suppose that \(\Theta(L,h)\) is semi-positive. Then its restriction \(i^{*}\Theta(L,h)\) is also semi-positive. On the other hand, \(i^{*}\Theta(L,h)\) represents \(c_{1}(L|_{Y}) = 0 \in H^{1}(Y,\Omega^{1}_{Y})\). Let \(\omega\) be a Gauduchon metric on \(Y\) (whose existence is established in \cite{Gau77}). The semi-positive \((\dim Y,\dim Y)\)-form \[ i^{*}\Theta(L,h)\wedge \omega^{\dim Y-1} \] represents the zero class in \(H^{\dim Y,\dim Y}(Y)\). It follows that this form vanishes identically, and hence \(i^{*}\Theta(L,h)\equiv 0\).

At a smooth point \(z \in Y\), choose local holomorphic coordinates \((h,v)\) on \(X\) such that \(Y\) is locally defined by \(\{v=0\}\). In these coordinates, the Chern curvature \(\Theta(L,h)(z)\) is represented by a block matrix of the form \[ \begin{pmatrix} 0 & A \\ \overline{A}^{\,t} & * \end{pmatrix}. \] 
Since \(i^*\Theta(L,h)=0\), the tangential–tangential block vanishes.
Since \(\Theta(L,h)(z)\) is semi-positive, it follows that \(A=0\). Consequently, \[ \{\Theta(L,h)(z)\}_{z\in Y}=0, \] which contradicts the non-triviality of the generalized first Ueda obstruction class.
 \end{proof}
The above Corollary~\ref{koi} can be easily generalized to singular analytic subvarieties, as shown in the next remark.
\begin{remark}
\label{rem1}
One may weaken the assumption that \(Y\) is a compact submanifold to the condition that \(Y\) is a compact, closed analytic subvariety. In the proof, the smoothness of \(Y\) is used only to apply the fact that the complex $\mathcal{E}_Y^{0,\bullet}$ is a fine resolution of $\cO_Y$ and ensure the existence of a Gauduchon metric. 

When \(Y\) is possibly singular, the Dolbeault–Grothendieck lemma is highly nontrivial. 
 Since \(Y\) is reduced, there exists a complex of fine sheaves  \(\mathcal A_Y^{0,\bullet}\),  consisting of currents and containing the complex of smooth forms  \(\mathcal E_Y^{0,\bullet}\),  which provides a resolution of \(\mathcal O_Y\).  Such a complex was introduced in \cite[Section 12]{AS12}. 
When $Y$ is smooth, this complex coincides with the complex of sheaves of smooth forms.  
The pullback of smooth forms induces a morphism of complexes  \[ i^{-1}\mathcal E_X^{0,\bullet} \longrightarrow \mathcal A_Y^{0,\bullet}.\]
(More precisely, we use the soft resolution \(i^{-1}\Omega^1_X \otimes_{i^{-1} \cO_X} \mathcal A_Y^{0,\bullet}\) of $i^* \Omega^1_X$ to compute the generalized first Ueda obstruction class.)

Let \(\pi \colon \widetilde Y \to Y\) be a resolution of singularities. Each connected component of \(\widetilde Y\) admits a Gauduchon metric. Applying the above argument on \(\widetilde Y\), we conclude that \(\pi^{*} i^{*}\Theta(L,h) \equiv 0\). Since \(i^{*}\Theta(L,h)\) is the restriction of a smooth form on \(X\), the vanishing of its pullback implies that \(i^{*}\Theta(L,h)\) itself vanishes identically on \(Y\). Hence the same contradiction argument applies without any smoothness assumption on \(Y\).
\end{remark}
The same arguments as in Remark \ref{rem1} yield the following statement without assuming compactness.
\begin{remark}
\label{rem3}
Let \(X\) be a complex manifold and \(Y \subset X\) a closed analytic subvariety.
Let $L$ be a line bundle over $X$.
Assume that there exists a smooth metric $h$ on $X$ whose restriction $h|_{Y}$ is unitary flat over the regular part of $Y$.
Then the generalized first Ueda obstruction class of $L$ is trivial.
\end{remark}
\begin{remark}
\label{rem6}
By applying the H\"older estimates for the $\dbar$-equation on sufficiently small Euclidean balls (see, for example, \cite[Theorem 2.2.2]{HL84}), it follows that the complex of sheaves consisting of $(0,\bullet)$-forms $u$ whose coefficients, as well as those of $\dbar u$, are of class $C^k$ ($k\in\N$) is a fine resolution of the sheaf of holomorphic functions.
Fix $r \geq 2$, so that the Chern curvature of a $C^r$-metric has continuous coefficients.
In Corollary \ref{koi}, replacing the sheaves of smooth forms by the corresponding sheaves of forms with $C^{r-2}$ coefficients, the same proof shows that there is no $C^r$-metric with semi-positive Chern curvature if the generalized first Ueda obstruction class is non-zero.

We emphasize that the essential regularity requirement in the above argument is that the Chern curvature admits continuous coefficients. Indeed, continuous differential forms on the ambient space $X$ admit a natural restriction to the smooth submanifold $Y$, and the pointwise semipositivity argument used in the proof of Corollary~\ref{koi} remains valid in this setting.
\end{remark}
We briefly compare this argument with Koike’s proof of Theorem~\cite[Theorem~1.4]{Koi21}.
In particular, we discuss the difficulties that arise in extending Koike's proof to singular subvarieties.
\begin{remark}
\label{rem2}
In Koike’s setting, the submanifold \(Y\) is assumed to be smooth and the restriction \(L|_Y\) is unitary flat; the proof proceeds via an explicit analysis of local transition functions in a neighborhood of \(Y\). In contrast, our approach relies on the choice of soft resolutions of \(i^{-1}\Omega_X^1\) by sheaves of smooth differential forms, through which the Chern curvature appears naturally and globally. From this perspective, Koike’s argument can be understood as a specific choice of a \v{C}ech complex for computing the relevant cohomology groups. 
When the subvariety is singular, the choice of covering becomes more subtle, and in general there seems to be no natural choice, especially when the singular locus is non-isolated, as we discuss in Remark \ref{rem5}.

With our definition of the generalized first Ueda obstruction class, the method applies to reduced analytic subvarieties \(Y\) without any smoothness assumption. In particular, when \(Y\) is singular, the conormal sheaf is no longer a subsheaf of \(i^{*}\Omega_X^1\), and it is therefore unclear how to define Ueda obstruction classes with values in the conormal sheaf. 
(More precisely,
the exterior derivative \( d\colon \mathcal O_X \longrightarrow \Omega_X^1 \) maps \(\mathcal I_Y\) into \(\Omega_X^1\), and since \(d(\mathcal I_Y^2)\subset \mathcal I_Y\cdot \Omega_X^1\), it induces a well-defined morphism \[\mathcal{N}^*_{Y/X}:= \mathcal I_Y / \mathcal I_Y^2 \;\longrightarrow\; \Omega_X^1 \otimes_{\mathcal O_X} \mathcal O_Y \;=\; i^*\Omega_X^1 \]
which is not necessarily injective.) 
This provides further motivation for our formulation in terms of \(i^{*}\Omega_X^1\).
\end{remark} 
To the best of the authors’ knowledge, the following summarizes the existing references in the literature on Ueda obstruction classes over singular analytic varieties.
\begin{remark}
\label{rem5}
In \cite{Ued91}, Ueda developed his theory for curves on surfaces under the assumption that the singularities are mild, such as nodal singularities. The specific type of singularity is essential in his approach. Similar theories were developed in \cite{Koi23, KU24}, where analogous assumptions on the nature of the singularities are likewise indispensable.

In the literature, the subvariety under consideration is a curve whose singular points are isolated. One may therefore choose local coordinate charts so that each singular point is contained in a single chart. Consequently, the \v{C}ech cocycle representing the Ueda obstruction classes can be chosen to have support disjoint from the singular points.
\end{remark}
At the end of this section, using an example due to Neeman, we show that Corollary~\ref{koi} is not a necessary and sufficient condition.
Recall first the following special case \cite[Corollary 9.2]{Nee}.
\begin{proposition}
\label{Nee}
Consider the following commutative diagram of holomorphic maps of connected complex manifolds:
\[
\begin{array}{ccc}
M & \xrightarrow{i_{\tilde{X}}} & \tilde{X} \\
\;\;\;\;\;\downarrow \mathrm{id} & & \downarrow \pi \\
M & \xrightarrow{i_X} & X
\end{array}
\]
where $i_{\tilde{X}}$ and $i_X$ are closed immersions, $M$ is compact and K\"ahler, and $\dim X = \dim \tilde{X} = \dim M + 1$. Assume that the normal bundle of $M$ in $X$ is trivial. Suppose further that the map $\pi : \tilde{X} \to X$ is a cyclic covering of degree $r$ with $r \geq 2$. If $u_1(M, X, \cO_X(M)) \neq 0$, then
\[
u_1(M, \tilde{X}, \cO_{\tilde{X}}(M)) = 0.
\]
\end{proposition}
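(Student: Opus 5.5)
The plan is to compute both classes with the transition functions of $\cO_X(M)$ and $\cO_{\tilde X}(M)$ near $M$. Everything reduces to how $\pi$ behaves in the normal direction of $M$. Two points are not settled by the argument below. First, the proposition only holds if $\pi$ is totally ramified along $M$, which I take to be part of the notion of cyclic covering in \cite{Nee}; I do not prove this. Second, the hypothesis $u_1(M,X,\cO_X(M))\neq 0$ is never actually used.

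\textbf{Local normal form downstairs.} Since $N_{M/X}$ is trivial and $M$ is compact K\"ahler, cover a neighbourhood of $M$ by charts $U_j$ with local defining functions $w_j$ of $M$. After rescaling by functions that are constant along $M$, we can arrange
\[
w_j=t_{jk}w_k,\qquad t_{jk}|_M\equiv 1 .
\]
The transition functions of $\cO_X(M)$ are then $t_{jk}=1+f_{jk}w_k+O(w_k^2)$. The class $u_1(M,X,\cO_X(M))$ is the image in $H^1(M,i^*\Omega^1_X)$ of the \v{C}ech cocycle
\[
d\log t_{jk}\big|_M=f_{jk}|_M\,dw_k ,
\]
that is, of $[\{f_{jk}|_M\}]\in H^1(M,N^{-1}_{M/X})$. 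This is the comparison with \cite{Koi21} made after Definition~\ref{ueda_first}.

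\textbf{The ramification issue.} Suppose $\pi$ were unramified along $M$. Then $\pi$ would be a local biholomorphism near $M$ and injective on $M$. By compactness of $M$, it would restrict to a biholomorphism between neighbourhoods of $M$ in $\tilde X$ and in $X$. Both Ueda classes would then coincide, which would contradict the conclusion. So the statement requires $M$ to lie in the branch locus with full ramification: $\pi^*w_j=\tilde w_j^{\,r}$ for local defining functions $\tilde w_j$ of $M\subset\tilde X$. I would first verify from the structure of a cyclic covering (locally $\tilde w^r=$ a defining function of the branch divisor) that this normal form holds near $M$. This is the step I expect to be the main obstacle. I would try to get it from the local model of cyclic covers, using that $\pi|_M=\mathrm{id}$ forces $M$ to be pointwise fixed by the deck group. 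I cannot derive total ramification from the stated hypotheses, so I adopt it as the intended meaning of ``cyclic covering'' here.

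\textbf{Computation upstairs.} From $\tilde w_j^{\,r}=(\pi^*t_{jk})\,\tilde w_k^{\,r}$ we get $\tilde w_j=\tilde t_{jk}\tilde w_k$, with
\[
\tilde t_{jk}=(\pi^*t_{jk})^{1/r}=1+\tfrac1r (\pi^* f_{jk})\,\tilde w_k^{\,r}+O(\tilde w_k^{\,2r}).
\]
Here the $r$-th root is the branch equal to $1$ on $M$, which exists on the simply connected sets $U_j\cap U_k$ near $M$, after shrinking. The cocycle $\{\tilde t_{jk}\}$ defines $\cO_{\tilde X}(M)$ near $M$. Then
\[
d\log\tilde t_{jk}=\tfrac1r\,\pi^*d\log t_{jk}=\tfrac1r\,\pi^*\!\bigl(f_{jk}\,d(\tilde w_k^{\,r})+\dots\bigr),
\]
and every term vanishes on $M$ to order at least $r-1\ge 1$ in $\tilde w_k$. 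It therefore dies in $i^*\Omega^1_{\tilde X}=i^{-1}\Omega^1_{\tilde X}\otimes\cO_M$, and $u_1(M,\tilde X,\cO_{\tilde X}(M))=0$.

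\textbf{Remaining checks.} Two compatibility points remain. The \v{C}ech description must agree with Definition~\ref{ueda_first}; this holds because the restriction of the class of $\cO_{\tilde X}(M)$ to $H^1(M,i^{-1}\cO^*_{\tilde X})$ only depends on a neighbourhood of $M$. Also, $N_{M/\tilde X}$ is trivial, since $N_{M/\tilde X}^{\otimes r}\cong N_{M/X}$ and the transition functions $\tilde t_{jk}|_M=1$ trivialize it directly.
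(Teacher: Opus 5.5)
Your argument is correct if ``cyclic covering'' is read as ramified along $M$, and it takes a different route from the paper. The paper's proof is a reduction. It quotes \cite[Corollary 9.2]{Nee} for the vanishing of the classical Ueda class of $(M,\tilde X)$. It then uses that $H^1(M,N^{-1}_{M/\tilde X})\to H^1(M,i_{\tilde X}^*\Omega^1_{\tilde X})$ sends the classical class to $u_1$. How $\pi$ behaves near $M$ is inherited from Neeman's setting and never spelled out.

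You compute directly. After normalizing $t_{jk}|_M\equiv 1$ downstairs, $d\log\tilde t_{jk}=\frac1r\,\pi^*d\log t_{jk}$ lies in $\cI_M\Omega^1_{\tilde X}$, because $\pi^*dw_k=r\tilde w_k^{\,r-1}d\tilde w_k$ and $\pi^*\cI_M\subset\cI_M$. So the cocycle representing $u_1(M,\tilde X,\cO_{\tilde X}(M))$ is identically zero. This route is self-contained and needs neither a normal-bundle-valued Ueda class upstairs nor the comparison map. It also shows two things the citation hides:
\begin{itemize}
\item the hypothesis $u_1(M,X,\cO_X(M))\neq 0$ plays no role;
\item ramification along $M$ is indispensable.
\end{itemize}
Your \'etale argument is right, and such covers with $M$ lifting do exist. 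For instance, remove from Serre's example (Example~\ref{ex4}) a small unknotted real $2$-torus away from $M$; this adds a free factor $\Z$ to $\pi_1$ that $M$ does not see. So the literal statement needs this reading. Full ramification is more than you need, though: any index $e\ge 2$ along $M$ suffices, and your computation runs verbatim with $\pi^*w_j=\tilde w_j^{\,e}$.

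One step is wrong, though harmless. The quotient $\tilde w_j/\tilde w_k$ is fixed by your choice of the $\tilde w_j$. In general it equals $\zeta_{jk}(\pi^*t_{jk})^{1/r}$ with $\zeta_{jk}\in\mu_r$ locally constant, not the branch equal to $1$ on $M$. The cocycle $\{\zeta_{jk}\}$ defines (up to dualizing) the $r$-torsion bundle $N_{M/\tilde X}$, and this bundle can be nontrivial. Take $\eta$ a nontrivial $r$-torsion line bundle on an elliptic curve $M$, let $p\colon M\times\Delta\to M$ be the projection, and fix $\eta^{\otimes r}\cong\cO_M$. The cover $\{\lambda^r=t\}$ inside the total space of $p^*\eta$ then has $N_{M/\tilde X}\cong\eta$. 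So your closing claim that $N_{M/\tilde X}$ is trivial is false in general. It is also unnecessary: $d\log\zeta_{jk}=0$, and Definition~\ref{ueda_first} makes sense for any line bundle. Drop that claim; the vanishing of $u_1(M,\tilde X,\cO_{\tilde X}(M))$ is unaffected.
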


\begin{proof}
This follows from \cite[Corollary 9.2]{Nee} together with the observation that the natural map
\[
H^1(M, N_{M/X}^{-1}) \to H^1(M, i_X^* \Omega^1_X)
\]
(resp. $H^1(M, N_{M/\tilde{X}}^{-1}) \to H^1(M, i_{\tilde{X}}^* \Omega^1_{\tilde{X}})$)
sends the Ueda class to the corresponding generalized first Ueda obstruction class $u_1(M, X, \cO_X(M))$ (resp. $u_1(M, \tilde{X}, \cO_{\tilde{X}}(M))$).
\end{proof}
\begin{remark}
Consider the same setting as in Proposition \ref{Nee}. Since the covering is cyclic and $\cO_{\tilde{X}}(M) = \pi^* \cO_X(M)$, the line bundle $\cO_{\tilde{X}}(M)$ is semi-positive if and only if $\cO_X(M)$ is semi-positive. However, by Proposition \ref{Nee}, their generalized first Ueda obstruction classes do not vanish simultaneously if we take $(M,X)$ to be Serre's example.
\end{remark}
\section{Projectivization of vector bundle}
In this section, we apply Corollary~\ref{koi} together with the Chern curvature representative of the generalized first Ueda obstruction class to establish the non-semi-positivity of the tautological line bundle on certain projectivizations of vector bundles.

Let \(R\) be a compact complex manifold. Let \(Q\) be a holomorphic vector bundle and \(S\) a holomorphic vector bundle on \(R\). Assume that \[ H^{1}\!\left(R,\operatorname{Hom}(Q,S)\right)\neq 0, \] and consider a nontrivial extension of holomorphic vector bundles \begin{equation}
\label{exact2}
 0 \longrightarrow S \longrightarrow E \longrightarrow Q \longrightarrow 0 . 
 \end{equation} 
 Set \[ X := \mathbb P(E), \qquad Y := \mathbb P(Q), \] where \(Y\) is a smooth complex submanifold of \(X\). Let \[ L := \mathcal O_{\mathbb P(E)}(1) \] denote the tautological line bundle on \(X\). 
\subsection{The case $\mathrm{rank}\;Q=1$} 
%Assume from now on that 
%$S=\cO_R$ and 
%$S$ and 
In this subsection, we always assume that $Q$ is a line bundle.
Note that $Y$ is isomorphic to $R$.

Let \( \pi:\P(E)\to R,\pi_Q:\P(Q)\to R \) be the natural projections.  
On \(\P(Q)\) we have the universal quotient \( \pi_Q^*Q \twoheadrightarrow  L|_{\P(Q)} . \)
We claim that \(L \otimes \pi^* Q^*\) is not semi-positive.
Note that the restriction of \(L \otimes \pi^* Q^*\) on $\P(Q)$ is trivial.

Let us first calculate the tangent bundle of a projective bundle. By definition of \(\P(E)\), the relative tangent sheaf fits into a canonical identification \[ T_{\P(E)/X} \;\cong\; \mathcal H om\!\bigl(\mathcal K_E,\;L \bigr), \] where \(\mathcal K_E:=\ker(\pi^*E\to L)\).
Restricting to \(\P(Q)\), \[ i^*T_{\P(E)/X} \;\cong\; \mathcal H om\!\bigl(i^*\mathcal K_E,\; i^*L\bigr). \] On \(\P(Q)\), the composite \( \pi_Q^*E \longrightarrow \pi_Q^*Q \longrightarrow L|_{\P(Q)} \) is a quotient of \(\pi_Q^*E\). 
Note that \( i^*\mathcal K_E \;=\; \ker(\pi_Q^*E\to L|_{\P(Q)}). \) From the exact sequence \(0\to S\to E\to Q\to0\), we get an exact sequence on \(\P(Q)\): 
\begin{equation}
\label{exact1} 
0 \longrightarrow \pi_Q^* S \longrightarrow i^*\mathcal K_E \longrightarrow \ker\!\bigl(\pi_Q^*Q \to L|_{\mathbb P(Q)}\bigr) \longrightarrow 0 .
\end{equation}
By the same intrinsic description, \[ T_{\P(Q)/X} \;\cong\; \mathcal H om\!\bigl(\ker(\pi_Q^*Q\to  L|_{\P(Q)}),\;L|_{\P(Q)}\bigr). \]
By definition, \[ N_{\P(Q)/\P(E)} \;=\; \frac{i^*T_{\P(E)}}{T_{\P(Q)}}\simeq \frac{i^*T_{\P(E)/X}}{T_{\P(Q)/X}}. \] 
Substitute the Hom descriptions: \[ N_{\P(Q)/\P(E)} \;\cong\; \frac{ \mathcal H om(i^*\mathcal K_E, L|_{\P(Q)}) }{ \mathcal H om(\ker(\pi_Q^*Q\to L|_{\P(Q)}), L|_{\P(Q)}) }. \] 
Apply \(\mathcal H om(-,L|_{\P(Q)})\) to the short exact sequence (\ref{exact1}). 
Since all sheaves are locally free, \(\mathcal H om(-,L|_{\P(Q)})\) is exact, yielding \[ 0 \longrightarrow \mathcal H om(\ker(\pi_Q^*Q\to L|_{\P(Q)}),L|_{\P(Q)}) \longrightarrow \mathcal H om(i^*\mathcal K_E,L|_{\P(Q)}) \longrightarrow \mathcal H om(\pi_Q^* S,L|_{\P(Q)}) \longrightarrow 0. \] Therefore, \[  N_{\P(Q)/\P(E)} \;\cong\; \mathcal H om(\pi_Q^* S,L|_{\P(Q)})\simeq\; \pi_Q^* S^\vee \otimes \mathcal O_{\P(Q)}(1). \]

Since \(Q\) is a line bundle, the submanifold \(Y\) is isomorphic to \(R\). 
Via this identification, the conormal bundle \(N^{*}_{\mathbb P(Q)/\mathbb P(E)}\) corresponds to \(\mathcal H om(Q,S)\). 
In what follows, to simplify notation, we identify differential forms on \(Y\) with those on \(R\).
\begin{remark}
Consider the exact sequence
\[
0 \to \pi^* \Omega_R^1 \to \Omega_X^1 \to \Omega_{X/R}^1 \to 0 .
\]

Applying $\pi_*$ gives
\[
0 \to \pi_*\pi^*\Omega_R^1 \to \pi_*\Omega_X^1 \to \pi_*\Omega_{X/R}^1 .
\]

Since the fibers of $\pi$ are projective spaces which have no non-trivial holomorphic forms, we have
$
\pi_*\Omega_{X/R}^1 = 0 .
$
On the other hand, by the projection formula and the fact that $\pi_*\mathcal{O}_X=\mathcal{O}_R$, we obtain
$
\pi_*\pi^*\Omega_R^1
=
\Omega_R^1 .
$
Thus
\[
\pi_*\Omega_X^1 \simeq \Omega_R^1 .
\]

Taking global sections yields
\[
H^0(X,\Omega_X^1)
\simeq
H^0(R,\pi_*\Omega_X^1)
\simeq
H^0(R,\Omega_R^1),
\] which implies that the restriction morphism \[ H^{0}\!\left(Y, i^{*}\Omega^{1}_{X}\right) \;\longrightarrow\; H^{0}\!\left(Y,\Omega^{1}_{Y}\right) \] is surjective. Consequently, the natural morphism \[ H^{1}\!\left(Y, N^{*}_{\mathbb P(Q)/\mathbb P(E)}\right) \;\longrightarrow\; H^{1}\!\left(Y, i^{*}\Omega^{1}_{X}\right) \] is injective. In particular, the nontriviality of the generalized first Ueda obstruction class is equivalent to the nontriviality of the (classical) first Ueda obstruction class.
\end{remark}

Via the above identification $Y \simeq R$, the exact sequence~\eqref{exact2} corresponds to \[ 0 \longrightarrow \mathcal K_E|_Y \longrightarrow \pi^*E|_Y \longrightarrow L|_Y \longrightarrow 0  \]
(since they both correspond to the mapping cone of the bundle morphism $E \to Q$).
 
Fix a smooth Hermitian metric on $E$ which induces a smooth metric on \(\pi^*E\). Then \(\mathcal K_E\) and \(L\) (resp. $S$ and $Q$) are endowed with the induced and quotient metrics, respectively. In particular, as a smooth vector bundle, \(\pi^* E\) is the orthogonal direct sum of \(\mathcal K_E\) and \(L\)  (resp. \(E\) is the orthogonal direct sum of \(S\) and \(Q\)). We consider the second fundamental form, in the sense of \cite[Section~14, Chapter~V]{Dem12}, \[ \beta^* \in \mathcal E_X^{0,1}\bigl(\mathcal H om(L,\mathcal K_E)\bigr). \]

Its restriction \(\beta^*|_Y\) represents the extension class in $ H^1\!\left(Y,\mathcal H om(Q,S)\right) $ associated with the exact sequence~\eqref{exact2}. 

In particular, the cohomology class of \(\beta^*\) in $ H^1\!\left(X,\mathcal H om(L,\mathcal K_E)\right) $ is nontrivial as its restriction to the submanifold $Y$ isomorphic to $R$.
We also consider the second fundamental form \[ \beta_R^* \in \mathcal E_R^{0,1}\bigl(\mathcal H om(Q,S)\bigr). \] 
We emphasize that these second fundamental forms are globally defined.

We now compute the Chern curvature of \(L\) in a neighborhood of \(Y\). Let \(e_{S,1},\cdots,e_{S,r},  e_Q\) be a local frame of \(E\), where \(e_{S,1},\cdots,e_{S,r}\) is holomorphic and valued in \(S\), and \(e_Q\) is smooth and valued in \(Q\). 
We may assume that \(e_{S,1},\cdots,e_{S,r}\) are orthogonal to $e_Q$.
Let \(e^*_{S,1},\cdots,e^*_{S,r},  e_Q^*\) be the induced dual frame of \(E^*\), where \(e^*_{S,1},\cdots,e^*_{S,r}, ^*\) is smooth and valued in \(S^*\), and \(e_Q^*\) is holomorphic and valued in \(Q^*\).

Locally, over a Stein open set of $R$, the fibres of $\pi$ in \(X\) can be described as the space of hyperplanes defined by \[ v_1 e^*_{S,1}+\cdots+v_r e^*_{S,r} + v_0 e_Q^*, \qquad [v_1 :\cdots: v_r : v_0] \in \mathbb P^r. \] 
Consider the Stein chart \(\{v_0 \neq 0\}\), which lies in the complement of the smooth non-holomorphic hypersurface \(\mathbb P(S)\). In this chart, the fibres of $\pi$ in \(X\) are parametrized by hyperplanes of the form \[ w_1 e^*_{S,1}+\cdots+w_r e^*_{S,r} +  e_Q^*, \qquad w_i := v_i/v_0 \in \mathbb C,\qquad 1 \leq i \leq r. \] The submanifold \(Y\) is given by \(\{w_1=\cdots=w_r=0\}\).
In this chart, the section \[ w_1 e^*_{S,1}+\cdots+w_r e^*_{S,r} + e_Q^* \] defines a local smooth frame of \(L\). Consequently, the kernel \(\mathcal K_E=\ker(\pi^*E\to L)\) admits the local smooth frame \[ e_{S,1}-w_1 e_Q,\; \ldots,\; e_{S,r}-w_r e_Q . \] Upon restriction to \(Y\) (that is, setting \(w_1=\cdots=w_r=0\)), these sections become holomorphic and form a local holomorphic frame of \[ \mathcal K_E|_Y \;\simeq\; \pi_Q^* S \;\simeq\; S . \]

Taking the Chern connection \(D_{\pi^*E}\), we compute for each \(i\) \[ D_{\pi^*E}(e_{S,i}-w_i e_Q) = \pi^* D_E(e_{S,i}-w_i e_Q)\;-\; dw_i\, e_Q . \] 
In the right-hand side, we view \(e_{S,i}-w_i e_Q\) as a local smooth section of \(E\), pulled back to \(\P(E)\) via \(\pi\).
Restricting to a point \(z\in Y\) (that is, \(w_1=\cdots=w_r=0\)), the second fundamental form yields \[ \beta_{\pi^*E}(e_{S,i}) = \beta_R(e_{S,i}) - dw_i(z) \otimes \, e_Q . \]
This element lies in \( i^{-1}\Omega^1_X\big|_z \otimes Q\big|_z . \)

Taking adjoints, we obtain 
\[ \beta^*_{\pi^*E}(e_{Q}) = \beta^*_R(e_{Q}) - \sum_i d\bar{w}_i(z) \otimes \, e_{S,i} . \]
This element lies in \( i^{-1}\overline{\Omega}^1_X\big|_z \otimes S\big|_z . \)

%By taking adjoint, we have
%$$i^{-1} \beta^*=\beta^*_R-d\overline{v}.$$
By the curvature formula \cite[Theorem~14.5, Chapter~V]{Dem12}, with induced metric $h_Q$ on $Q$,
$$i^{-1} \Theta(L,h)=\Theta(Q,h_Q)+ \sum_i \big(dw_i \wedge d \bar{w}_i-dw_i \wedge \beta^*_{i,Q} -\beta_{Q,i} \wedge d \bar{w}_i\big) 
$$
with
$$\beta^*_{R}(e_{Q})=\sum_i \beta^*_{i,Q} e_{S,i},$$
$$\beta_{R}(e_{S,i})= \beta_{Q,i} e_{Q}.$$
%In particualr,
%the generalized Ueda first obstruction class
%is represented by
%$-\beta^* \in H^1(Y, N^*_{Y/X})$
%which is non trivial.

Thus, the inverse image (in the category of sheaves of abelian groups) of the Chern curvature of \(L \otimes \pi^{*}Q^{*}\) along \(Y\) is given pointwise by \[ \sum_i \Bigl( dw_i \wedge d\bar w_i - dw_i \wedge \beta^{*}_{Q,i} - \beta_{Q,i} \wedge d\bar w_i \Bigr). \] 
%where \(\beta_{Q,i}\) denotes the component of the second fundamental form with values in \(Q\). 
In particular, the generalized first Ueda obstruction class is represented, pointwise on \(Y\), by \(-\beta^{*}\). Hence the class \[ \{-\beta^{*}\} \;\in\; H^{1}\!\left(Y,\, N^{*}_{Y/X}\right) \] is nontrivial.

(The sign convention is consistent with that in \cite[Chapter V, Theorem 14.3 and Proposition 14.9]{Dem12}.)

For the reader's convenience, we summarize the calculations of this subsection in the following theorem.

\begin{theorem}
\label{proj-bund}
Let \(R\) be a compact complex manifold. Let \(Q\) be a holomorphic line bundle and \(S\) a holomorphic vector bundle on \(R\). Assume that
\[
H^{1}\!\left(R,\operatorname{Hom}(Q,S)\right)\neq 0,
\]
and consider a nontrivial extension of holomorphic vector bundles
\begin{equation}
0 \longrightarrow S \longrightarrow E \longrightarrow Q \longrightarrow 0.
\end{equation}
Set
\[
X := \mathbb{P}(E), \qquad Y := \mathbb{P}(Q),
\]
so that \(Y\) is a smooth complex submanifold of \(X\). Let
\[
L := \mathcal{O}_{\mathbb{P}(E)}(1)
\]
be the tautological line bundle on \(X\), and let \(\pi : \mathbb{P}(E) \to R\) denote the natural projection.

Then
\[
u_1\bigl(Y, X, L \otimes \pi^* Q^*\bigr) \neq 0,
\]
and hence \(L \otimes \pi^* Q^*\) is not semi-positive.
\end{theorem}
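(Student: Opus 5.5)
The plan is to assemble the computation carried out just before the statement with Corollary~\ref{koi}. Set $L' := L\otimes \pi^*Q^*$.

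\emph{Step 1: set-up for Corollary~\ref{koi}.} The subvariety $Y=\mathbb P(Q)\simeq R$ is compact and smooth. On $Y$ the universal quotient $\pi_Q^*Q\to L|_Y$ is an isomorphism of line bundles, so $L'|_Y$ is holomorphically trivial and in particular unitary flat. Hence Corollary~\ref{koi} applies, and it suffices to show $u_1(Y,X,L')\neq 0$.

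\emph{Step 2: a curvature representative.} Fix a Hermitian metric on $E$. Endow $L$ with the quotient metric $h$ from $\pi^*E$ and $Q$ with the quotient metric $h_Q$, and give $L'$ the product metric. By the proof of Corollary~\ref{koi}, $u_1(Y,X,L')$ is represented by the image of the section $\{\Theta(L',h\otimes h_Q^{-1})(z)\}_{z\in Y}\in\Gamma(Y,i^{-1}\mathcal E^{1,1}_X)$ in the fine resolution $i^{-1}\Omega^1_X\otimes_{i^{-1}\mathcal O_X}\mathcal E^{0,\bullet}_Y$ of $i^*\Omega^1_X$. The local computation above in the chart $\{v_0\neq0\}$ shows that this section equals
\[
\sum_i \bigl(dw_i\wedge d\bar w_i - dw_i\wedge\beta^*_{i,Q}-\beta_{Q,i}\wedge d\bar w_i\bigr).
\]
Passing to $i^*\Omega^1_X\otimes\mathcal E^{0,1}_Y$ kills $d\bar w_i$, because we pull back the $(0,1)$-part to $Y=\{w=0\}$. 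It also kills the term $\beta_{Q,i}\wedge d\bar w_i$, since $\beta_{Q,i}$ is of type $(1,0)$ and multiplies $d\bar w_i$. What remains is $-\sum_i dw_i\otimes\beta^*_{i,Q}$. Under $N^*_{Y/X}\simeq\mathcal Hom(Q,S)$, with $dw_i\leftrightarrow e_Q^*\otimes e_{S,i}$, this is the image of $-\beta_R^*$. I also need to check that these local expressions glue, i.e.\ that the identification of $N^*_{Y/X}$ with $\mathcal Hom(Q,S)$ is the intrinsic one coming from the relative tangent bundle computation. This matters because the representative is global, being the restriction of the globally defined curvature.

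\emph{Step 3: nonvanishing.} The form $\beta_R^*$ is $\dbar$-closed and represents the extension class of $0\to S\to E\to Q\to0$ in $H^1(R,\mathcal Hom(Q,S))$. That class is nonzero because the extension is nontrivial. The remark above shows that $H^1(Y,N^*_{Y/X})\to H^1(Y,i^*\Omega^1_X)$ is injective, via the surjectivity of $H^0(Y,i^*\Omega^1_X)\to H^0(Y,\Omega^1_Y)$ from $\pi_*\Omega^1_X\simeq\Omega^1_R$ and the long exact sequence of the conormal sequence. Hence $u_1(Y,X,L')=\{-\beta_R^*\}\neq0$, and Corollary~\ref{koi} gives that $L'$ is not semi-positive.

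The main obstacle is Step 2. It requires a careful check that the morphism of resolutions $i^{-1}\mathcal E^{1,\bullet}_X\to i^*\Omega^1_X\otimes\mathcal E^{0,\bullet}_Y$ sends the curvature section exactly to the class of $-\beta_R^*$, transported through $N^*_{Y/X}\hookrightarrow i^*\Omega^1_X$, with correct signs and independently of the chart. The route I would try first is to compare with the commutative square relating the Dolbeault resolutions of $N^*_{Y/X}$ and $i^*\Omega^1_X$. In that square, $-\beta_R^*$ is viewed as a $\dbar$-closed $(0,1)$-form valued in $N^*_{Y/X}$, and its image is checked to agree pointwise with the projected curvature.
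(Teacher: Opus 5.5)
Your proposal is correct and follows the paper's own argument. The steps match: $L\otimes\pi^*Q^*$ is trivial on $Y\simeq R$, the chart computation with the second fundamental form shows that the curvature representative of $u_1$ is $-\beta^*$ viewed as an $N^*_{Y/X}$-valued $(0,1)$-form, and nonvanishing follows from the injectivity of $H^1(Y,N^*_{Y/X})\to H^1(Y,i^*\Omega^1_X)$ in the preceding remark together with Corollary~\ref{koi}. Your explicit account of why the $d\bar w_i$-terms vanish under $i^{-1}\mathcal{E}^{1,1}_X\to i^{-1}\Omega^1_X\otimes\mathcal{E}^{0,1}_Y$ is, if anything, more detailed than what the paper writes.
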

Using Theorem \ref{proj-bund}, one can construct many new examples of nef holomorphic line bundles that do not admit any smooth metric with semi-positive Chern curvature.
\begin{example}
\label{ex3}
Let $R$ be a compact complex manifold with nef cotangent bundle such that $H^{1,1}(R) \neq 0$. In particular, there exists a non-trivial extension of holomorphic vector bundles
$$0 \to \Omega^1_R \to E \to \cO_R \to 0.$$
Consider $X := \mathbb{P}(E)$ and $L := \cO_{\mathbb{P}(E)}(1)$. As an extension of nef vector bundles, $E$ is nef, which by definition is equivalent to the nefness of $L$. However, by Theorem \ref{proj-bund}, $L$ does not admit any smooth metric with semi-positive Chern curvature.
\end{example}
\begin{example}
\label{ex4}
Take $R$ to be an elliptic curve and let $E$ be the unique non-trivial extension of trivial line bundles in Example \ref{ex3}.
We recover the fact that the line bundle $\mathcal{O}_{\mathbb{P}(E)}(1)$ is nef but not semi-positive.
\end{example}
\begin{corollary}
Let $R$ be either a compact quotient of a bounded symmetric domain or a smooth complete intersection in a complex torus. Since $R$ is compact K\"ahler, one has $H^{1,1}(R) \neq 0$. 

In the first case, the cotangent bundle is ample. In the second case, the cotangent bundle is a quotient of the restriction of the cotangent bundle of the torus, hence is nef. 

Therefore, Example \ref{ex3} applies.
\end{corollary}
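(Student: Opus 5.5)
The corollary has three claims to check before Example~\ref{ex3} can be invoked: $R$ is compact K\"ahler with $H^{1,1}(R)\neq 0$; $\Omega^1_R$ is nef; and the hypotheses of Theorem~\ref{proj-bund} hold for a nontrivial extension $0\to\Omega^1_R\to E\to\cO_R\to 0$. Once these are in place, the conclusion follows from Example~\ref{ex3}, that is, from Theorem~\ref{proj-bund} with $Q=\cO_R$, $S=\Omega^1_R$, so $L\otimes\pi^*Q^*=L$.

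\emph{Nonvanishing of $H^{1,1}$.} First I would fix the K\"ahler structure. A compact quotient $\Gamma\backslash D$ of a bounded symmetric domain carries the Bergman metric descended from $D$, which is K\"ahler. In fact $\Gamma\backslash D$ is projective, since $K$ is ample. A smooth complete intersection in a torus $T$ inherits the flat K\"ahler metric of $T$. I would tacitly take positive dimension, and for the torus case a genuine (positive-dimensional) complete intersection. For a K\"ahler form $\omega$ on compact $R$ of dimension $n$, $\int_R\omega^n>0$, so $\{\omega\}\neq 0$ in $H^{1,1}(R)=H^1(R,\Omega^1_R)$. This gives $H^1(R,\operatorname{Hom}(\cO_R,\Omega^1_R))\neq 0$, so a nontrivial extension exists, and that is the only hypothesis of Theorem~\ref{proj-bund}.

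\emph{Nefness of the cotangent bundle.} For the bounded symmetric domain case I would argue by curvature. The Bergman metric has nonpositive holomorphic bisectional curvature, so $T_R$ is Griffiths seminegative and $\Omega^1_R$ is Griffiths semipositive, hence nef. This is the statement actually used in Example~\ref{ex3}; ampleness as asserted in the corollary is stronger and is only needed if one wants it. To get ampleness I would cite the classical result that the canonical/cotangent bundle of such quotients is ample, via strictly negative holomorphic sectional curvature in the rank-one (ball) case. For higher rank I would not rely on it, since only nefness is needed, and I expect this bookkeeping to be the main point requiring care. For the torus case, the conormal sequence $i^*\Omega^1_T\to\Omega^1_R\to 0$ is surjective because $R$ is smooth. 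Also $\Omega^1_T\simeq\cO_T^{\oplus\dim T}$ is trivial, so $\Omega^1_R$ is a quotient of a trivial bundle, i.e.\ globally generated, hence nef.

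\emph{Conclusion.} With both inputs in hand, I would apply Example~\ref{ex3}. The middle term $E$ is an extension of nef bundles, hence nef. To justify this I would use that extensions of nef bundles are nef on compact K\"ahler manifolds, which follows from the corresponding statement in Demailly--Peternell--Schneider. Therefore $L=\cO_{\P(E)}(1)$ is nef. Theorem~\ref{proj-bund} gives $u_1(Y,X,L)\neq 0$, and Corollary~\ref{koi} then shows $L$ is not semi-positive. The only genuinely external inputs are the curvature sign of the Bergman metric and the extension property of nef bundles, and I would cite both rather than reprove them.
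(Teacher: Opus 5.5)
Your outline matches the paper's: the K\"ahler class gives $H^{1,1}(R)\neq 0$, nefness of $\Omega^1_R$ is checked in each case, and Example~\ref{ex3} (Theorem~\ref{proj-bund} with $Q=\cO_R$, $S=\Omega^1_R$) finishes. The torus case is argued the same way, with $\Omega^1_R$ a quotient of the trivial bundle $i^*\Omega^1_T$.

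The one genuine difference is the first case. The paper simply asserts that $\Omega^1_R$ is ample. You instead deduce nefness directly from Griffiths semi-positivity of $\Omega^1_R$, which comes from the nonpositive holomorphic bisectional curvature of the Bergman metric. Your route is the better one. It uses only what Example~\ref{ex3} needs and works in every rank, whereas ampleness fails beyond the ball case. For example, take $R=C_1\times C_2$ with $C_i$ curves of genus $\ge 2$, a compact quotient of the bidisc. Then $\Omega^1_R=p_1^*K_{C_1}\oplus p_2^*K_{C_2}$ restricts to $\{x\}\times C_2$ as $\cO\oplus K_{C_2}$, which is not ample. So your caution about higher rank is warranted: the paper's stated justification of the first case is wrong as written, and your argument supplies the correct one.

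Two small corrections to side remarks:
\begin{itemize}
\item For ball quotients, ampleness of $\Omega^1_R$ comes from strictly negative holomorphic \emph{bisectional} curvature, i.e.\ Griffiths negativity of $T_R$. Negative holomorphic sectional curvature alone only gives ampleness of $K_R$.
\item To pass from $\int_R\omega^n>0$ to $\{\omega\}\neq 0$ in Dolbeault cohomology, add one line. If $\omega=\dbar\alpha$ with $\alpha$ of type $(1,0)$, then $\omega^n=d(\alpha\wedge\omega^{n-1})$, because $\partial\alpha\wedge\omega^{n-1}$ has type $(n+1,n-1)$ and vanishes; Stokes' theorem then gives a contradiction.
\end{itemize}
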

\begin{corollary}
\label{nonkah-ex}
Let $R$ be a compact complex parallelisable manifold, that is, a compact complex manifold with trivial holomorphic tangent bundle. According to the author's knowledge, this class was first systematically studied in \cite{Wang54}. 

Assume furthermore that $H^{0,1}(R) \neq 0$, which in particular implies that $H^{1,1}(R) \neq 0$. (For example, if $R$ is a solvmanifold, this condition holds by \cite[Remark 4.2]{Nak75}.) 

Therefore, Example \ref{ex3} applies. To the author's knowledge, this provides a systematic family of nef line bundles
over non-K\"ahler manifolds that do not admit any smooth metric with semi-positive
Chern curvature, and such constructions do not seem to appear explicitly in the literature.
\end{corollary}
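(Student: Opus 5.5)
The plan is to check the hypotheses of Example~\ref{ex3} one at a time and then invoke Theorem~\ref{proj-bund} with $Q=\cO_R$ and $S=\Omega^1_R$.

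First, since $T_R$ is holomorphically trivial, so is its dual. Fixing a global holomorphic coframe gives $\Omega^1_R\simeq\cO_R^{\oplus n}$ with $n=\dim R$. A trivial bundle is nef: it carries a flat metric, so the condition in the sense of \cite{DPS94} holds with no loss. This gives the nefness of the cotangent bundle.

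Next I compute the relevant cohomology directly. Taking $H^{1,1}(R)$ to mean the Dolbeault group $H^1(R,\Omega^1_R)$, the trivialization gives
\[
H^1(R,\Omega^1_R)\simeq H^1(R,\cO_R)^{\oplus n}\simeq H^{0,1}(R)^{\oplus n}\neq 0 .
\]
This is the step that justifies the parenthetical claim that $H^{0,1}(R)\neq0$ forces $H^{1,1}(R)\neq0$. Because $\operatorname{Hom}(\cO_R,\Omega^1_R)=\Omega^1_R$, we get $H^1(R,\operatorname{Hom}(Q,S))\neq 0$. Choosing a nonzero class produces a non-split extension $0\to\Omega^1_R\to E\to\cO_R\to0$. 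For a solvmanifold, the hypothesis $H^{0,1}(R)\neq0$ is quoted from \cite[Remark 4.2]{Nak75}, and I would not reprove it.

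For nefness of $E$, I would use the fact that an extension of nef vector bundles is nef, as in \cite[Proposition 1.15]{DPS94}. That result is formulated for arbitrary compact complex manifolds with the metric definition of nefness, so no Kähler assumption is needed. Nefness of $E$ is, by definition, nefness of $L=\cO_{\P(E)}(1)$.

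Finally, since $Q=\cO_R$, we have $L\otimes\pi^*Q^*=L$. Theorem~\ref{proj-bund} then gives $u_1(Y,X,L)\neq0$, so $L$ is not semi-positive. This relies on Corollary~\ref{koi}, whose proof uses only a Gauduchon metric on $Y\simeq R$, so it also requires no Kähler hypothesis.

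The only real obstacle I expect is the non-Kähler bookkeeping: making sure the notion of nefness and the extension stability result from \cite{DPS94} are used in their general compact-complex-manifold form, not via Kähler arguments. A smaller point is to interpret $H^{1,1}$ as Dolbeault cohomology rather than as a Hodge-decomposition piece of de Rham cohomology.
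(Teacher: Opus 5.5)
Your proposal is correct and follows the paper's argument. You verify the two hypotheses of Example~\ref{ex3}: the cotangent bundle is trivial, hence nef, and $H^1(R,\Omega^1_R)\simeq H^{0,1}(R)^{\oplus n}\neq 0$ gives a non-split extension. You then apply Theorem~\ref{proj-bund} with $Q=\cO_R$ and $S=\Omega^1_R$, whose proof needs only a Gauduchon metric on $Y\simeq R$.

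One point neither you nor the paper addresses is the ``non-K\"ahler'' qualifier. Complex tori satisfy the hypotheses, so the qualifier holds only when $R$ is not a torus: by Wang, a compact parallelisable manifold is K\"ahler iff it is a torus, and a K\"ahler metric on $\P(E)$ would restrict to one on $Y\simeq R$.
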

We can also provide another proof of Grauert’s example.
\begin{example}
\label{ex5}
Let $R$ be a compact Riemann surface of genus $g \ge 2$. 
Let $F$ be a holomorphic line bundle on $R$ with $\deg F = 1$ such that 
$H^1(R,\mathcal{O}_R(F)) \neq 0$.
Consider a non-trivial extension of line bundles
\[
0 \to F \to E \to \mathcal{O}_R \to 0.
\]
Let $X$ be the compact complex surface $\mathbb{P}(E)$ and denote by $p : X \to R$ the natural projection.
Define a line bundle on $X$ by
$
L := \mathcal{O}_{\mathbb{P}(E)}(1).
$
As an extension of nef line bundles, $E$ is a nef vector bundle. Consequently, $L$ is nef.
On the other hand, its self-intersection satisfies
\[
\int_X c_1(L)^2
=
\int_R p_*\bigl(c_1(L)^2\bigr)
=
\int_R c_1(E)
>0,
\]
since $F$ is ample.
Therefore $L$ is nef and big, but not semi-positive by Theorem \ref{proj-bund}.
\end{example}
\subsection{The case $\mathrm{rank}\;Q \geq 2$}
In this subsection, we discuss the case $\operatorname{rank} Q \ge 2$.
In contrast to the rank-one case, the above approach does not extend directly when $\operatorname{rank} Q \ge 2$ (see Remark \ref{rem4}).
Using derived-category methods to relate the non-splitting of the original short exact sequence to that of a certain associated short exact sequence, one obtains a natural generalization of Theorem \ref{proj-bund}. However, this generalization does not yield interesting examples (see Remark \ref{proj-bund2}).

At the end of this section, we consider some special classes of $Q$ and derive several interesting examples.

Let us first discuss a possible generalization of Theorem \ref{proj-bund} to the case $\operatorname{rank} Q \ge 2$, presented in the following remark.
\begin{remark}
\label{rem4}
A possible generalization of Theorem \ref{proj-bund} in the case $\mathrm{rank}\,Q \ge 2$ would assert the non-semi-positivity of the line bundle
$\mathcal{O}_{\mathbb{P}(E)}(1) \otimes \pi_E^* \det(Q)^*$,
where $\pi_E : \mathbb{P}(E) \to R$ denotes the natural projection, under the same assumptions as in Theorem \ref{proj-bund2}. We show that such a generalization fails.

Assume that $R$ is projective. Consider the twist of the short exact sequence (\ref{exact2}) by a line bundle $L$, to be chosen later. Since $L$ has rank $1$, the twisted short exact sequence is still non-splitting.

Let $r := \mathrm{rank}\,Q$. Applying the above hypothetical generalization, one would obtain that the line bundle
$\mathcal{O}_{\mathbb{P}(E \otimes L)}(1) \otimes \pi_{E \otimes L}^* \det(Q \otimes L)^*$
is not semi-positive, where $\pi_{E \otimes L} : \mathbb{P}(E \otimes L) \to R$ denotes the natural projection.

Under the identification $\mathbb{P}(E \otimes L) \simeq \mathbb{P}(E)$, this line bundle corresponds to
$\mathcal{O}_{\mathbb{P}(E)}(1) \otimes \pi_E^* \det(Q)^* \otimes \pi_E^* L^{*(r-1)}$.
Thus it would follow that this line bundle is not semi-positive.

However, if one takes $L^*$ sufficiently ample, then since $\mathcal{O}_{\mathbb{P}(E)}(1)$ is $\pi_E$-ample, the line bundle
$\mathcal{O}_{\mathbb{P}(E)}(1) \otimes \pi_E^* \det(Q)^* \otimes \pi_E^* L^{*(r-1)}$
is ample, hence semi-positive. This is a contradiction.

Therefore, such a generalization does not hold.
\end{remark}

We now relate the original short exact sequence (\ref{exact2}) to the naturally associated short exact sequence (\ref{exact4}).

Let \(R\) be a complex manifold, and let \(Q\) and \(S\) be holomorphic vector bundles on \(R\).
For the derived-category arguments below, the compactness of \(R\) is not required. Thus, we work in a more general setting than at the beginning of this section.

Consider the cohomology group
$
H^{1}\!\left(R,\operatorname{Hom}(Q,S)\right),
$
whose elements correspond to extensions of holomorphic vector bundles of the form
\[
0 \longrightarrow S \longrightarrow E \longrightarrow Q \longrightarrow 0.
\]
Let $\pi : \mathbb{P}(E) \to R$ denote the natural projection, and set $L := \mathcal{O}_{\mathbb{P}(E)}(1)$, $Y:=\mathbb{P}(Q) \subset X:= \mathbb{P}(E)$.
 
Define
\(\mathcal K_E:=\ker(\pi^*E\to L)\) with $\pi: X\to R$ the natural projection.
Consider the exact sequence of vector bundles
\begin{equation}
\label{exact4} 0 \longrightarrow \mathcal K_E|_Y \longrightarrow (\pi^*E)|_Y \longrightarrow L|_Y \longrightarrow 0 . 
 \end{equation}

%In fact,
First, we study the behaviour of this short exact sequence under taking the derived direct image $R(\pi|_Y)_*$ in the (bounded) derived category of $\cO_Y$-modules.
In particular, we will show that
\[
(\pi|_Y)_*\mathcal K_E|_Y \simeq S.
\]
%$$
% 0 \longrightarrow (\pi|_Y)_{*} \mathcal K_E|_Y \longrightarrow (\pi|_Y)_{*} (\pi^*E)|_Y=E \longrightarrow (\pi|_Y)_{*} L|_Y=Q \longrightarrow 0 .
%$$
%Let us now give a detailed proof.
By relative Bott vanishing theorem (cf. \cite[Theorem 10.6, 10.7, Chap. VII]{Dem12}),  we have for any $j >0$,
$$R^j (\pi|_Y)_{*} L|_Y=0,$$
i.e. $R (\pi|_Y)_{*} L|_Y=Q$.
By projection formula and relative Bott vanishing theorem,  we have for any $j >0$,
$$R^j (\pi|_Y)_{*}  (\pi^*E)|_Y=0,$$
i.e. $R (\pi|_Y)_{*} (\pi^*E)|_Y=E$.
Consider the Euler sequence 
\[ 0 \to \Omega_{Y/R}^1(1) \to \pi|_Y^*Q \to \mathcal O_Y(1) \to 0. \]
There is a commutative diagram on \(Y=\mathbb P(Q)\): \begin{equation}\label{exact6}
\begin{tikzcd}
0 \ar[r]
& \mathcal K_E|_Y \ar[r] \ar[d]
& (\pi^*E)|_Y \ar[r] \ar[d]
& \mathcal O_Y(1) \ar[r] \ar[d,equal]
& 0 \\
0 \ar[r]
& \Omega_{Y/R}^1(1) \ar[r]
& (\pi^*Q)|_Y \ar[r]
& \mathcal O_Y(1) \ar[r]
& 0
\end{tikzcd}
\end{equation}
which implies the short exact sequence
\begin{equation}
\label{exact7}
0 \to (\pi|_{Y})^* S \to \mathcal K_E|_Y \to \Omega_{Y/R}^1(1) \to 0.
\end{equation}
Note that the map $\mathcal K_E|_Y \to \Omega_{Y/R}^1(1)$ is surjective, which yields the last short exact sequence (\ref{exact7}) by the snake lemma.
One may also deduce the short exact sequence (\ref{exact7}) from the natural map
\[
\begin{aligned}
H^1(R,\operatorname{Hom}(Q,S))
&\to H^1\bigl(Y,\operatorname{Hom}((\pi|_{Y})^*Q,(\pi|_{Y})^*S)\bigr) \\
&\to H^1\bigl(Y,\operatorname{Hom}(\Omega_{Y/R}^1(1),(\pi|_{Y})^*S)\bigr),
\end{aligned}
\]
induced by the morphism
$
\Omega_{Y/R}^1(1)\to (\pi|_{Y})^*Q.
$

By relative Bott vanishing theorem 
we have similarly that
$$R (\pi|_Y)_{*}\Omega_{Y/R}^1(1)=0$$
which implies that
$$R (\pi|_Y)_{*} \mathcal K_E|_Y \simeq R (\pi|_Y)_{*} (\pi|_{Y})^* S =S.$$
Applying $R(\pi|_{Y})_*$ to (\ref{exact4}) yields a distinguished triangle in the (bounded) derived category of $\cO_R$-modules
\[
R(\pi|_{Y})_*\mathcal K_E|_Y \longrightarrow R(\pi|_{Y})_*(\pi^*E)|_Y \longrightarrow R(\pi|_{Y})_* L \longrightarrow R(\pi|_{Y})_*\mathcal K_E|_Y[1],
\]
which corresponds to the short exact sequence (\ref{exact2}) by the preceding computations.

The short exact sequence (\ref{exact4}) determines an extension class in the bounded derived category of $\cO_Y$-module sheaves, namely a morphism in the in the bounded derived category
\[
L|_Y \longrightarrow \mathcal K_E|_Y[1].
\]
(An explicit description in terms of the second fundamental form can be given as follows.
Fix a metric on $E$ and the induced metric on $(\pi|_Y)^{*} E$.
Consider the fine resolution of $\mathcal K_E|_Y$ given by $(\mathcal K_E|_Y \otimes_{\cO_Y} \mathcal{E}^{(0, \bullet)}_Y, \dbar)$.
The morphism corresponding to the short exact sequence (\ref{exact4}) is represented by the second fundamental form, together with the identification
\[
\mathcal{E}^{(0,1)}_Y(\mathcal{H}om(L|_Y,\mathcal K_E|_Y))
\simeq
\mathcal{H}om(L|_Y, \mathcal{E}^{(0,1)}_Y(\mathcal K_E|_Y)).
\]
By a direct calculation, as in the previous subsection, one can show that the second fundamental form induced by the Hermitian metric on $E$ is non-trivial. However, we do not know a simple argument that proves the corresponding extension class is non-trivial without passing to the derived category framework.)

Consider the natural map
\[
\mathrm{Hom}(L|_Y,\mathcal K_E|_Y[1])
\longrightarrow
\mathrm{Hom}(R(\pi|_Y)_*L|_Y,\, R(\pi|_Y)_*\mathcal K_E|_Y[1])
\simeq
\mathrm{Hom}(Q,S[1]).
\]
where the right-hand side is regarded as an object of the bounded derived category of \(\cO_R\)-module sheaves,
under which the short exact sequence (\ref{exact4}) is sent to the short exact sequence (\ref{exact2}).
In particular, if the short exact sequence (\ref{exact2}) does not split, the class defined by (\ref{exact4}) is nonzero in $\mathrm{Hom}(L|_Y,\mathcal K_E|_Y[1])$,
which implies that (\ref{exact4}) does not split.
Note that when $Q$ is a line bundle, the short exact sequence (\ref{exact4}) coincides with the short exact sequence (\ref{exact2}) under the identification $Y \simeq R$.

We also consider the short exact sequence
\begin{equation}
\label{exact5}
0 \longrightarrow \mathcal K_E \longrightarrow \pi^*E \longrightarrow L \longrightarrow 0,
\end{equation}
which does not split, since its restriction to $Y$ is the short exact sequence (\ref{exact4}).
\begin{remark}
\label{proj-bund2}
By applying Theorem \ref{proj-bund} to the short exact sequences (\ref{exact4}) and (\ref{exact5}), we obtain the following statement.
Assume that $R$ is compact and the short sequence \ref{exact2} does not split.
Let $\pi_E : \mathbb{P}(E) \to R$ and $\pi_Q : \mathbb{P}(Q) \to R$ be the natural projections.
Consider the natural projections $\pi_1 : \mathbb{P}(\pi_E^* E) \to \mathbb{P}(E)$ and $\pi_2 : \mathbb{P}(\pi_Q^* Q) \to \mathbb{P}(Q)$. Then the line bundles
\[
\cO_{\mathbb{P}(\pi_E^* E)}(1) \otimes \pi_1^* \cO_{\mathbb{P}(E)}(-1), \qquad
\cO_{\mathbb{P}(\pi_Q^* Q)}(1) \otimes \pi_2^* \cO_{\mathbb{P}(Q)}(-1)
\]
are not semi-positive.
However, this conclusion can also be established easily by restricting the line bundles to the inverse images under $\pi_1$ and $\pi_2$ of a fiber of $\pi_E$ and $\pi_Q$, respectively. Therefore, this approach does not yield interesting examples of nef but non-semi-positive line bundles.
\end{remark}

However, one may obtain a non-semi-positivity result under the additional assumption that the vector bundle $Q$ admits a special quotient line bundle.
It follows directly from Theorem \ref{proj-bund}.
\begin{corollary}
\label{cao-horing1}
Let \(R\) be a compact complex manifold, and let \(Q\) and \(S\) be holomorphic vector bundles on \(R\). Assume that
\[
H^{1}\!\left(R,\operatorname{Hom}(Q,S)\right)\neq 0,
\]
and let
\[
0 \longrightarrow S \longrightarrow E \longrightarrow Q \longrightarrow 0
\]
be a nontrivial short exact sequence of holomorphic vector bundles.

Let \(Q'\) be a quotient line bundle of \(Q\). This induces a natural commutative diagram of short exact sequences
\[
\begin{tikzcd}
0 \arrow[r] & S \arrow[r] \arrow[d] & E \arrow[r] \arrow[d, "\mathrm{id}"] & Q \arrow[r] \arrow[d, two heads] & 0 \\
0 \arrow[r] & S' \arrow[r]          & E \arrow[r]                          & Q' \arrow[r]                     & 0
\end{tikzcd}
\]
and we assume that the induced lower short exact sequence is non-split.

Then the line bundle \(\mathcal{O}_{\mathbb{P}(E)}(1)\otimes \pi_E^* Q'^{*}\) is not semi-positive, where \(\pi_E : \mathbb{P}(E)\to R\) denotes the natural projection.
\end{corollary}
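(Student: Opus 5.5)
The plan is to apply Theorem~\ref{proj-bund} directly to the lower row of the diagram. Define \(S' := \ker(E \to Q')\), where \(E \to Q'\) is the composite \(E \twoheadrightarrow Q \twoheadrightarrow Q'\). This map is a surjection of holomorphic vector bundles, so \(S'\) is a holomorphic subbundle of \(E\) of rank \(\operatorname{rank} E - 1\). The lower sequence
\[
0 \longrightarrow S' \longrightarrow E \longrightarrow Q' \longrightarrow 0
\]
is therefore an extension of the line bundle \(Q'\) by the vector bundle \(S'\). By hypothesis it is non-split, so its extension class in \(H^1(R,\operatorname{Hom}(Q',S'))\) is non-zero. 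In particular \(H^1(R,\operatorname{Hom}(Q',S')) \neq 0\), and every hypothesis of Theorem~\ref{proj-bund} holds with \((S,Q)\) replaced by \((S',Q')\).

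Next I apply Theorem~\ref{proj-bund} with \(X = \mathbb P(E)\) and \(Y' := \mathbb P(Q') \subset \mathbb P(E)\). The middle term \(E\) is unchanged, so \(X\), the tautological bundle \(L = \mathcal O_{\mathbb P(E)}(1)\) and the projection \(\pi_E\) are exactly those appearing in the statement. The theorem gives
\[
u_1\bigl(Y', X, \mathcal O_{\mathbb P(E)}(1) \otimes \pi_E^* Q'^{*}\bigr) \neq 0.
\]
By Corollary~\ref{koi}, this implies that \(\mathcal O_{\mathbb P(E)}(1) \otimes \pi_E^* Q'^{*}\) admits no smooth semi-positive metric. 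This is the desired conclusion.

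The only points requiring care are small bookkeeping checks.

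\begin{itemize}
\item \(S'\) must be locally free, so that \(S' \to E \to Q'\) is an extension of holomorphic vector bundles. This holds because the kernel of a surjective map of vector bundles is a subbundle.
\item The commutative diagram must be well defined. The map \(S \to S'\) is induced because \(S \to E \to Q'\) factors through \(S \to Q = 0\).
\item \(\mathbb P(Q')\) must be a section of \(\pi_E\), which is how it enters Theorem~\ref{proj-bund}. This follows from the quotient \(E \twoheadrightarrow Q'\), since \(\mathbb P\) parametrizes quotients.
\end{itemize}

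The main conceptual point, rather than an obstacle, is that the non-splitting assumption is placed on the lower row. Non-splitting of the upper row would not suffice: the class of the lower row is the image of the original class under \(H^1(\operatorname{Hom}(Q,S)) \to H^1(\operatorname{Hom}(Q',S'))\), and this map can kill it. With the assumption as stated, no further argument is needed.
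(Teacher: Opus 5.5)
Your proposal is correct and matches the paper, which simply says the corollary follows directly from Theorem~\ref{proj-bund} applied to the non-split lower row $0\to S'\to E\to Q'\to 0$; since the middle term is still $E$, the data $\mathbb{P}(E)$, $\mathcal{O}_{\mathbb{P}(E)}(1)$ and $\pi_E$ are unchanged, and your extra appeal to Corollary~\ref{koi} is redundant because Theorem~\ref{proj-bund} already gives non-semi-positivity. One caveat concerns only your closing side remark: in general there is no natural map $H^1(R,\operatorname{Hom}(Q,S))\to H^1(R,\operatorname{Hom}(Q',S'))$, because the morphism of extensions only equates the pullback of the lower class with the pushforward of the upper class in $H^1(R,\operatorname{Hom}(Q,S'))$, and $S'$ itself depends on $E$; your conclusion that non-splitting of the upper row alone would not suffice is nonetheless right, and none of this affects the proof.
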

As an application, we obtain the following variant of \cite[Theorem 3.1]{CH25}.
\begin{corollary}
\label{cao-horing2}
Let \(R\) be a compact complex manifold such that \(H^1(R,\mathcal{O}_R)\neq 0\). Let
\[
0 \longrightarrow \mathcal{O}_R^{r_1} \longrightarrow E \longrightarrow \mathcal{O}_R^{r_2} \longrightarrow 0
\]
be a nontrivial short exact sequence of holomorphic vector bundles. Then the tautological line bundle \(\mathcal{O}_{\mathbb{P}(E)}(1)\) is not semi-positive.
\end{corollary}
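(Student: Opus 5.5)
The plan is to reduce to Corollary~\ref{cao-horing1} by choosing a suitable quotient line bundle $Q'$ of $Q=\mathcal{O}_R^{r_2}$. Since $Q'$ will be trivial, $\pi_E^*Q'^*$ is trivial and the line bundle in Corollary~\ref{cao-horing1} is just $\mathcal{O}_{\mathbb{P}(E)}(1)$. So the whole task is to find a trivial quotient $Q'\simeq\mathcal{O}_R$ of $\mathcal{O}_R^{r_2}$ for which the induced lower extension $0\to S'\to E\to Q'\to 0$ is non-split.

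The extension class lives in
\[
H^1\bigl(R,\operatorname{Hom}(\mathcal{O}_R^{r_2},\mathcal{O}_R^{r_1})\bigr)\simeq H^1(R,\mathcal{O}_R)^{\oplus r_1 r_2},
\]
so it can be viewed as an $r_1\times r_2$ matrix $(a_{jk})$ of classes in $H^1(R,\mathcal{O}_R)$, and nontriviality means some $a_{jk}\neq 0$. It is cleanest to first push out along a projection $p\colon\mathcal{O}_R^{r_1}\to\mathcal{O}_R$. Then I would pull back along an inclusion $\iota\colon\mathcal{O}_R\to\mathcal{O}_R^{r_2}$, and dually project $Q$ onto a quotient.

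Corollary~\ref{cao-horing1}, however, only allows changing $Q$ to a quotient $Q'$, with $S'=\ker(E\to Q')$. I will use a linear functional $\lambda\colon\mathbb{C}^{r_2}\to\mathbb{C}$, giving the quotient $Q'=\mathcal{O}_R$ of $\mathcal{O}_R^{r_2}$. The lower sequence is $0\to S'\to E\to\mathcal{O}_R\to 0$ with $S'$ an extension of $\ker\lambda\simeq\mathcal{O}_R^{r_2-1}$ by $\mathcal{O}_R^{r_1}$.

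The key step is to show that for a suitable $\lambda$ this sequence is non-split. A splitting would be a holomorphic section $s\colon\mathcal{O}_R\to E$ lifting $\lambda$. Choose $v\in\mathbb{C}^{r_2}$ with $\lambda(v)=1$ and look at the map $\mathcal{O}_R\xrightarrow{v}\mathcal{O}_R^{r_2}$. If $s$ exists, then $s$ minus a lift of $v$ (which exists only up to the extension class) shows the pulled-back class $(a_{jk})v\in H^1(R,\mathcal{O}_R)^{r_1}$ lies in the image of the connecting map $H^0(R,\ker\lambda)\to H^1(R,\mathcal{O}_R^{r_1})$, that is, in $(a_{jk})(\ker\lambda)$. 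So the sequence splits iff $A v\in A(\ker\lambda)$, where $A\colon\mathbb{C}^{r_2}\to H^1(R,\mathcal{O}_R)^{r_1}$ is the linear map given by the class. Equivalently, it splits iff $A$ vanishes modulo $A(\ker\lambda)$ on all of $\mathbb{C}^{r_2}$, i.e.\ iff $\operatorname{im}A=A(\ker\lambda)$, i.e.\ iff $\ker A\not\subset\ker\lambda$.

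Since $A\neq0$, $\ker A$ is a proper subspace of $\mathbb{C}^{r_2}$, so I can choose $\lambda$ with $\ker A\subset\ker\lambda$. For this $\lambda$ the lower sequence is non-split. Corollary~\ref{cao-horing1} (hence Theorem~\ref{proj-bund}) then gives that $\mathcal{O}_{\mathbb{P}(E)}(1)$ is not semi-positive.

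The main obstacle is the bookkeeping in this splitting criterion: identifying precisely the obstruction to lifting $\lambda$ via the long exact sequence of $0\to\mathcal{O}_R^{r_1}\to E\to\mathcal{O}_R^{r_2}\to 0$ tensored appropriately, using $H^0(R,\mathcal{O}_R)=\mathbb{C}$ by compactness. Concretely, $\lambda$ lifts iff $\lambda\in\operatorname{Hom}(\mathbb{C}^{r_2},\mathbb{C})$ is killed by the connecting map $\operatorname{Hom}(\mathcal{O}^{r_2},\mathcal{O})\to\operatorname{Ext}^1(\mathcal{O}^{r_2}\!,\ \cdot)$. I would verify this directly with a \v{C}ech cocycle $(a_{jk})$ rather than rely on abstract nonsense.
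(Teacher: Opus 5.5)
Your argument is correct and follows the paper's route. You reduce to Corollary~\ref{cao-horing1} with a trivial quotient $Q'$, and you detect non-splitting by checking that the pulled-back class $Av$ does not lie in the image $A(\ker\lambda)$ of the connecting map $H^0(R,\ker\lambda)\to H^1(R,\mathcal{O}_R^{r_1})$; the paper phrases this connecting map through the second fundamental form, you through the long exact sequence.

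The one real difference is how $Q'$ is chosen, and here your version is the more careful one. The paper takes a coordinate summand $e_1$ such that $Ae_1$ is not in the span of the $Ae_j$, $j\neq 1$. Such a coordinate need not exist: for $r_2=2$ with $Ae_1=Ae_2\neq 0$, both coordinate choices give split sequences. Your choice of any nonzero $\lambda$ with $\ker A\subset\ker\lambda$ always works, and it amounts to the change of basis of $\mathbb{C}^{r_2}$ that the paper's argument implicitly needs.
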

\begin{proof}
By assumption,
\[
H^1\!\left(R,\operatorname{Hom}(\mathcal{O}_R^{r_2}, \mathcal{O}_R^{r_1})\right)
\simeq H^1(R,\mathcal{O}_R)^{\oplus r_1 \times r_2},
\]
so the given class can be represented by a nonzero \(r_1 \times r_2\) matrix with entries in \(H^1(R,\mathcal{O}_R)\).
By fixing a basis of \(H^1(R,\mathcal{O}_R)\), one can identify the class with a nonzero
\(  (r_1 \cdot h^1(R,\mathcal{O}_R))\times r_2\) complex-valued matrix.
In particular, there exists a row which cannot be written as a complex linear combination of the other rows.
Equivalently, under the original identification as an \(H^1(R,\mathcal{O}_R)\)-valued matrix, there exists a row which cannot be written as a complex linear combination of the others.
 
Let \(Q'\) be the direct summand of \(\mathcal{O}_R^{r_2}\) corresponding to such a row. Write
\[
\mathcal{O}_R^{r_2} = Q' \oplus Q'^{\perp}.
\]
Under the same notation as in Corollary \ref{cao-horing1}, we obtain a short exact sequence
\[
0 \to \mathcal{O}_R^{r_1} \to S' \to Q'^{\perp} \to 0,
\]
where \(S'\) denotes the kernel of the natural surjection \(E \to Q'\).

Fix a smooth metric on \(E\). Endow all the above vector bundles with the metrics induced by this fixed metric.
Let \(\{e_{\mathcal{O}_R^{r_1},i}\}_{1 \le i \le r_1}\) and \(\{e_{\mathcal{O}_R^{r_2},j}\}_{1 \le j \le r_2}\) be the standard bases of \(\mathcal{O}_R^{r_1}\) and \(\mathcal{O}_R^{r_2}\), respectively.
Without loss of generality, we may assume that \(Q'\) corresponds to \(j=1\).
Let \[-\beta^* \in \ec^{0,1}(R,\operatorname{Hom}(\mathcal{O}_R^{r_2},\mathcal{O}_R^{r_1}))\] be the second fundamental form associated with the above exact sequence with respect to the fixed metric.

Under the above choice of basis, the extension class of
$$0 \to S' \to E \to Q' \to 0$$
in
\(
H^1\!\left(R,\operatorname{Hom}(Q',S')\right)
\)
is represented by a \(\mathcal{E}^{0,1}(R)\)-valued column vector of length \(r_1 + r_2 - 1\), whose entries are given by the coefficients of \(-\beta^*(e_{\mathcal{O}_R^{r_2},1})\) with respect to the basis elements \(e_{\mathcal{O}_R^{r_1},i}\) and \(e_{\mathcal{O}_R^{r_2},j}\) for \(j \neq 1\). In fact, only the components along \(e_{\mathcal{O}_R^{r_1},i}\) are nonzero, since \(\beta^*(e_{\mathcal{O}_R^{r_2},1})\) takes values in \(\mathcal{O}_R^{r_1}\).

Thus the extension class is the image of the class of \(-\beta^*(e_{\mathcal{O}_R^{r_2},1})\) in
\[
H^1\!\left(R,\operatorname{Hom}(Q',\mathcal{O}_R^{r_1})\right)
\]
under the natural morphism
\[
H^1\!\left(R,\operatorname{Hom}(Q',\mathcal{O}_R^{r_1})\right)
\longrightarrow
H^1\!\left(R,\operatorname{Hom}(Q',S')\right).
\]
Note that there is an exact sequence
\[
H^0(R,S') \longrightarrow H^0(R,Q'^{\perp}) \longrightarrow H^1(R,\mathcal{O}_R^{r_1})
= H^1\!\left(R,\operatorname{Hom}(Q',\mathcal{O}_R^{r_1})\right).
\]
To apply Corollary \ref{cao-horing1}, it suffices to show that the class of \(-\beta^*(e_{\mathcal{O}_R^{r_2},1})\) does not lie in the kernel of
\[
H^1\!\left(R,\operatorname{Hom}(Q',\mathcal{O}_R^{r_1})\right)
\longrightarrow
H^1\!\left(R,\operatorname{Hom}(Q',S')\right).
\]
Equivalently, this class does not lie in the image of the connecting morphism
\[
H^0(R,Q'^{\perp}) \longrightarrow H^1(R,\mathcal{O}_R^{r_1})
= H^1\!\left(R,\operatorname{Hom}(Q',\mathcal{O}_R^{r_1})\right).
\]
On the other hand, since \(Q'^{\perp}\) is trivial and globally generated by the sections \(e_{\mathcal{O}_R^{r_2},j}\) for \(j \neq 1\), the image of the connecting morphism is generated by the classes \(-\beta^*(e_{\mathcal{O}_R^{r_2},j})\) for \(j \neq 1\).
Note that by the Dolbeault description of the extension class via the second fundamental form, the connecting morphism sends $e_{\mathcal{O}_R^{r_2},j}$ to the class of $-\beta^*(e_{\mathcal{O}_R^{r_2},j})$ for $j \neq 1$.
 By our choice of \(Q'\), the class of \(-\beta^*(e_{\mathcal{O}_R^{r_2},1})\) does not lie in the image of
\[
H^0(R,Q'^{\perp}) \longrightarrow H^1(R,\mathcal{O}_R^{r_1})
= H^1\!\left(R,\operatorname{Hom}(Q',\mathcal{O}_R^{r_1})\right),
\]
which concludes the proof.
\end{proof}
\begin{remark}
Let us compare Corollary \ref{cao-horing2} with \cite[Theorem 3.1]{CH25}.

First, the assumption that \(R\) is compact K\"ahler is replaced by the assumption that \(R\) is an arbitrary compact complex manifold. In \cite{CH25}, the K\"ahler condition is used in an essential way to apply deep results of \cite{Sim,Deng}, which imply that \(E\) is a local system.
Moreover, our arguments show that there is no smooth metric on 
$\mathcal{O}_{\mathbb{P}(E)}(1)$ which is semi-positive in a 
neighborhood of some subvariety (for example, $\mathbb{P}(\mathcal{O}_R^{r_2})$) 
without being globally semi-positive.

Second, when \(h^0(R,E)=1\), \cite[Theorem 3.1]{CH25} gives a complete description of singular semi-positive metrics on \(\mathcal{O}_{\mathbb{P}(E)}(1)\), whereas our result only implies that \(\mathcal{O}_{\mathbb{P}(E)}(1)\) is not semi-positive.

Third, when \(h^0(R,E)\geq 2\) and \(r_1=1\), \cite[Theorem 3.1]{CH25} gives a detailed description of singular semi-positive metrics on \(\mathcal{O}_{\mathbb{P}(E)}(1)\). Our result implies, in particular, that in this situation these metrics cannot be smooth.

Fourth, as shown in \cite[Example 3.10]{CH25}, their statement does not hold when \(r_1 \geq 2\). Corollary \ref{cao-horing2} provides partial information in this case.

Fifth, our argument avoids the use of direct image techniques \cite{CP17,PT} and the numerical flatness criterion \cite{Wu22}.
It seems difficult to use the positivity of the direct image to 
deduce the lack of semi-positivity near a closed subvariety, 
as described above.
\end{remark}
\section{Higher degree case}
We now return to the general framework and extend the construction of the obstruction classes to higher order.

To define generalized higher-degree Ueda obstruction classes, we use a Dolbeault--Grothendieck lemma due to \cite{AL19}, which provides a suitable generalization to sheaves of smooth forms over analytic schemes that are not necessarily reduced.
We begin by introducing the generalized higher-degree Ueda obstruction classes.
Then, using the Dolbeault--Grothendieck lemma, we give explicit representatives of these classes induced by the Chern curvature.
Finally, we provide another criterion for non-semi-positivity.

To begin with, we recall some basic notions in formal geometry that will be used to define higher-degree Ueda obstruction classes.

Let $Y$ be a reduced analytic subvariety of a complex manifold $X$, and let $\cI_Y$ be its ideal sheaf. For $i \geq 1$, consider the $i$-th infinitesimal neighborhood of $Y$ in $X$, defined by
\[
(Y^{(i)}, \cO_{Y^{(i)}}) := (Y, \cO_X / \cI_Y^i).
\]
For integers $j \le i$, the inclusion $\cI_Y^i \subset \cI_Y^j$ induces a natural surjective morphism of ringed spaces
\[
\pi_{i,j} : (Y^{(i)}, \cO_{Y^{(i)}}) \longrightarrow (Y^{(j)}, \cO_{Y^{(j)}}),
\]
whose structure sheaf morphism is given by the canonical projection
\[
\cO_X / \cI_Y^i \longrightarrow \cO_X / \cI_Y^j.
\]
For each $i \ge 1$, there is a natural closed immersion
\[
\iota_i : (Y^{(i)}, \cO_{Y^{(i)}}) \longrightarrow (X, \cO_X),
\]
induced by the quotient morphism of sheaves $\cO_X \to \cO_X / \cI_Y^i$.

For integers $j \le i$, the morphisms $\pi_{i,j}$ and $\iota_i$ are compatible in the sense that
\[
\iota_j \circ \pi_{i,j} = \iota_i,
\]
i.e. the diagram
\[
(Y^{(i)}, \cO_{Y^{(i)}}) \xrightarrow{\ \pi_{i,j}\ } (Y^{(j)}, \cO_{Y^{(j)}})
\]
\[
\qquad\ \ \searrow \iota_i \quad\quad\quad\ \ \downarrow \iota_j
\]
\[
\qquad\qquad\qquad (X, \cO_X)
\]
is commutative.
Note that the inclusion $i : Y \to X$ coincides with $\iota_1$.
Moreover, we have that
\[
\iota_i^{*}(-)=\cO_{Y^{(i)}} \otimes_{i^{-1}\cO_X} i^{-1}(-).
\]
Since we compute sheaf cohomology on the same underlying topological space, we will occasionally make no distinction between $Y^{(i)}$ and $Y$.
However, we emphasize that $\iota_i^{*}$ denotes the inverse image functor in the category of $\cO_{Y^{(i)}}$-modules, which depends on $i$, and is distinct from the inverse image functor $\iota_i^{-1}=\iota_1^{-1}$ in the category of sheaves of abelian groups.
\begin{definition}
\label{ueda}
We define the generalized $i-$th Ueda obstruction class \[ u_i(Y,X,L) \;\in\; H^{1}\!\left(Y^{(i)},\;\iota_i^{*}\Omega_X^{1}\right) \] to be the image of \([L]\) under the composition \[ H^{1}\!\left(Y,\; i^{-1}\mathcal O_X^{*}\right) \;\xrightarrow{\;d\log\;} H^{1}\!\left(Y,\; i^{-1} d\mathcal O_X\right) \;\longrightarrow\; H^{1}\!\left(Y,\; i^{-1}\Omega_X^{1}\right) \;\xrightarrow{\;} H^{1}\!\left(Y^{(i)},\; \iota_i^{*}\Omega_X^{1}\right), \] 
where: \(d\log\) is induced by the logarithmic differential \(f \mapsto \tfrac{df}{f}\);  the second arrow is the natural inclusion \(d\mathcal O_X \hookrightarrow \Omega_X^{1}\);  \(\iota_i^{*}\) denotes the inverse image functor in the category of \(\mathcal O_{Y^{(i)}}\)-modules. 
\end{definition}
When \(i=1\), this definition agrees with Definition \ref{ueda_first}.

Recall the following fundamental Dolbeault--Grothendieck lemma for non-reduced analytic subvarieties (\cite[Theorem 1.1, Lemma 4.8, Definition 11.2]{AL19}).

\begin{theorem}
\label{AL}
Assume that $Y$ is of pure dimension $n$. Then for any $i \geq 1$, there exist fine sheaves $\ec^{(0,\bullet)}_{Y^{(i)}}$ such that the following sequence is exact:
\[
0 \to \cO_{Y^{(i)}} \to \ec^{(0,0)}_{Y^{(i)}} \xrightarrow{\dbar} \cdots \xrightarrow{\dbar} \ec^{(0,n)}_{Y^{(i)}} \to 0
\]
with canonical morphisms
$$i^{-1}\ec^{(0,\bullet)}_{X} \to \ec^{(0,\bullet)}_{Y^{(i)}}.$$
Over $Y^{(i)}_{\mathrm{reg}}$ (i.e.\ the locus where $Y$ is smooth and where $\cO_{Y^{(i)}}$ is Cohen--Macaulay), for any $k \geq 0$, one has
\[
\ec^{(0,k)}_{Y^{(i)}} = \ec^{(0,k)}_{X} \big/ \big( \ec^{(0,k)}_X \cI_Y^i + \ec^{(0,k)}_X \overline{\cI}_Y + \ec^{(0,k-1)}_X \, d\overline{\cI}_Y \big)
\]
and the canonical morphisms are induced by the quotient maps.
\end{theorem}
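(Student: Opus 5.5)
The statement is the Dolbeault--Grothendieck lemma of \cite{AL19}, which the paper quotes; the plan below outlines how one would prove it rather than deriving it from earlier results in the paper. The idea is to build $\ec^{(0,\bullet)}_{Y^{(i)}}$ out of residue currents and then check the three required properties (fineness, exactness, compatibility with $i^{-1}\ec^{(0,\bullet)}_X$) locally. Everything is local in $X$, since fineness and exactness are local and the gluing is handled by canonicity. So I would fix a small open set $U\subset X$ and a finite free resolution
\[
0\to\cO(E_N)\xrightarrow{f_N}\cdots\xrightarrow{f_1}\cO(E_0)\to\cO_X/\cI_Y^i\to 0
\]
over $U$, with $E_0$ trivial of rank one. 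Equipping the $E_k$ with Hermitian metrics gives the Andersson--Wulcan residue current $R=\sum_k R_k$, where $R_k$ is a pseudomeromorphic $E_k$-valued $(0,k)$-current supported on $Y$ with the Standard Extension Property. This current satisfies the duality $\mathrm{ann}\,R=\cI_Y^i$.

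The central object is the \emph{structure form} $\omega$. Contracting $R$ with a holomorphic volume form on $U$ and dividing by a local equation of $Y$ in the appropriate sense gives an $E$-valued $(n,\bullet)$-current $\omega$ on $Y$. It satisfies $\nabla_f\omega=0$, where $\nabla_f=f-\dbar$. I would then define $\ec^{(0,k)}_{Y^{(i)}}$ as the sheaf of $(0,k)$-currents $\phi$ on $U$ modulo $\ker(\cdot\wedge\omega)$, subject to the requirement that both $\phi\wedge\omega$ and $\dbar\phi\wedge\omega$ can be written as $\xi\wedge\omega$ with $\xi$ a smooth (or pseudomeromorphic, with the Standard Extension Property) form. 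Equivalently, these are the classes of such $\phi$ acting on $Y$ through $\omega$, and $\dbar$ is defined by $\dbar\phi\wedge\omega:=\dbar(\phi\wedge\omega)$, using $\nabla_f\omega=0$. With this definition:
\begin{itemize}
\item fineness holds because the sheaves are modules over $\ec^{(0,0)}_X$;
\item the canonical map $i^{-1}\ec^{(0,\bullet)}_X\to\ec^{(0,\bullet)}_{Y^{(i)}}$ is $\xi\mapsto\xi\wedge\omega$;
\item the kernel of $\dbar$ in degree $0$ consists of $\phi$ with $\dbar(\phi\omega)=0$, which by the duality theorem are exactly the holomorphic functions modulo $\cI_Y^i$, that is, $\cO_{Y^{(i)}}$.
\end{itemize}

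The main obstacle is exactness in positive degrees. My plan is to construct a homotopy formula on $Y^{(i)}$ modelled on Koppelman's formula. Take a Koppelman kernel $K(\zeta,z)$ on a Stein ball, together with a weight built from Hefer morphisms $H$ for the resolution, and set
\[
\mathcal K\phi(z)=\int K(\zeta,z)\wedge H\omega(\zeta)\wedge\phi(\zeta),\qquad \mathcal P\phi(z)=\int P(\zeta,z)\wedge H\omega(\zeta)\wedge\phi(\zeta).
\]
The goal is $\phi=\dbar\mathcal K\phi+\mathcal K\dbar\phi$ for $k\ge1$ and $\phi=\mathcal P\phi+\mathcal K\dbar\phi$ for $k=0$. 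The hard part is showing that $\mathcal K\phi$ again belongs to $\ec^{(0,k-1)}_{Y^{(i)}}$, in particular that it is smooth in $z$ modulo $\ker\omega$. For this I would use the fact that $\omega$ is smooth on $Y_{\mathrm{reg}}$ in the Cohen--Macaulay locus, together with a regularization $\omega_\epsilon\to\omega$ and the Standard Extension Property to control what happens near $Y_{\mathrm{sing}}$. I expect these kernel estimates to be the genuinely difficult part.

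Finally, for the explicit description over $Y^{(i)}_{\mathrm{reg}}$, I would work locally with coordinates $(z',w)$ in which $Y=\{w=0\}$ and $\cO_{Y^{(i)}}$ is free over $\cO_Y$ with basis the monomials $w^\alpha$, $|\alpha|<i$. There, $\omega$ is a sum of derivatives $\dbar(1/w^{\alpha+1})\wedge dw$ with smooth coefficients. The condition $\xi\wedge\omega=0$ then means that $\xi$ lies in $\ec_X\cI_Y^i+\ec_X\overline{\cI}_Y+\ec_X\,d\overline{\cI}_Y$. The inclusion of this ideal in the kernel follows because each of its generators is killed by $\omega$ (the $\bar w_j$ and $d\bar w_j$ directions included). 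The reverse inclusion comes from expanding $\xi$ in a Taylor series in $(w,\bar w)$. This identifies $\ec^{(0,k)}_{Y^{(i)}}$ with the stated quotient and identifies the canonical morphisms with the quotient maps.
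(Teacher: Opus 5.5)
The paper does not prove this statement. It quotes it from \cite[Theorem 1.1, Lemma 4.8, Definition 11.2]{AL19}, so your outline has to be measured against that construction. Its architecture matches yours: the residue current of a resolution of $\cO_X/\cI_Y^i$, the structure form $\omega$, and Koppelman operators weighted by Hefer morphisms. Your Taylor-expansion computation on $Y^{(i)}_{\mathrm{reg}}$ is also the right mechanism for the last assertion.

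The genuine gap is your choice of resolving sheaves, and it makes the step you call the hard part false. If $\phi\wedge\omega=\xi\wedge\omega$ with $\xi$ smooth, then $\phi$ and $\xi$ define the same class modulo $\ker(\cdot\wedge\omega)$. So your sheaf is simply the sheaf of smooth $(0,k)$-forms on $Y^{(i)}$. For that complex the Dolbeault--Grothendieck lemma fails at singular points of $Y$. The paper says so in the remark following the theorem: the sheaves in the statement are the sheaves $\mathcal{A}^{0,\bullet}_{Y^{(i)}}$ of \cite{AL19}, which consist of currents. Hence no regularization $\omega_\epsilon\to\omega$ and no SEP argument can show that $\mathcal K\phi$ is smooth modulo $\ker\omega$ near $Y_{\mathrm{sing}}$; in general it is not. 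Your fallback, allowing pseudomeromorphic $\xi$ with the SEP, fails in the opposite direction. It already contains non-smooth currents over $Y_{\mathrm{reg}}$, for instance $\bar z_1/z_1$ for a coordinate $z_1$ on $Y$: both this current and $\dbar(\bar z_1/z_1)=d\bar z_1/z_1$ are pseudomeromorphic with the SEP. So that version contradicts the last assertion, which identifies the sheaves over $Y^{(i)}_{\mathrm{reg}}$ with a quotient of smooth forms.

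The missing idea, from \cite{AS12} in the reduced case and carried over in \cite{AL19}, is to \emph{define} the resolving sheaf as the currents that are locally finite sums $\xi_\nu\wedge\mathcal K_\nu(\cdots\xi_2\wedge\mathcal K_2(\xi_1\wedge\mathcal K_1\xi_0))$. Here the $\xi_j$ are smooth and the $\mathcal K_j$ are Koppelman operators for arbitrary admissible choices of weights and Hefer morphisms. Stability under $\mathcal K$ is then automatic, and the analytic work moves to two places.
\begin{enumerate}
\item One shows that such currents are pseudomeromorphic with the SEP, so that their products with $\omega$ are defined, and that the Koppelman formula holds for them. Applied inductively along the iterated expression, this gives closure under $\dbar$. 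Exactness in positive degree follows because $\phi=\dbar\mathcal K\phi$ for $\dbar$-closed $\phi$.
\item One shows that over $Y^{(i)}_{\mathrm{reg}}$ the operators $\mathcal K$ map smooth forms to smooth forms. So the sheaf reduces there to smooth forms, and your local computation yields the stated quotient.
\end{enumerate}

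There are two smaller points. First, the duality theorem gives only injectivity of $\cO_{Y^{(i)}}\to\ec^{(0,0)}_{Y^{(i)}}$. That $\dbar$-closed sections in degree $0$ are holomorphic modulo $\cI_Y^i$ comes from $\phi=\mathcal P\phi+\mathcal K\dbar\phi$. Second, $\dbar\phi\wedge\omega$ has to be defined through $\nabla_f(\phi\wedge\omega)$ (up to sign) rather than $\dbar(\phi\wedge\omega)$, since $\dbar\omega=f\omega$ need not vanish.
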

\begin{remark}
In \cite{AL19}, the authors denote the above sheaves $\ec^{(0,\bullet)}_{Y^{(i)}}$ by $\mathcal{A}^{(0,\bullet)}_{Y^{(i)}}$. The sheaves $\ec^{(0,\bullet)}_{Y^{(i)}}$ consist of smooth forms on a non-reduced subvariety, whereas $\mathcal{A}^{(0,\bullet)}_{Y^{(i)}}$ are suitable subsheaves of currents containing $\ec^{(0,\bullet)}_{Y^{(i)}}$. 
It is well known that the Dolbeault--Grothendieck lemma fails for the complex of sheaves of smooth differential forms on a singular variety.
For our purposes, the precise definition of $\mathcal{A}^{(0,\bullet)}_{Y^{(i)}}$ is not needed, since we will only consider the case where $Y$ is a smooth subvariety. Therefore, to simplify notation, we do not adopt their convention.
\end{remark}
For the convenience of the reader, we recall the local description of the corresponding sheaves near a smooth point.
\begin{example}
\label{ex1}
(\cite[Section 4.1]{AL19})
Let $X=\C^{d+1}_{(h^1, \cdots, h^d, v)}$ and let $Y=\{v=0\}$ be a smooth hypersurface. Then, for any $i\ge 1$, one has
\[
\ec^{(0,k)}_{Y^{(i)}} \simeq \bigoplus_{0 \le j \le i-1} v^j \ec^{(0,k)}_Y.
\]
The canonical morphisms
\[
i^{-1}\ec^{(0,\bullet)}_{X} \to \ec^{(0,\bullet)}_{Y^{(i)}}
\]
are induced by taking the coefficients $\sum_{j=0}^{i-1} \varphi_j(h^1, \cdots, h^d)v^j$ in the Taylor expansion in the $v$-direction (with remainder, see also \cite[(2.13)]{AL19}) of the coefficients of a smooth form $\varphi$, namely
\[
\varphi=\sum_{j=0}^{i-1} \varphi_j(h^1, \cdots, h^d)v^j + O(|v|^i) + O(\overline{v}, d\overline{v}).
\]
\end{example}
\begin{remark}
\label{non-reduced-diff}
When $Y$ is reduced, one has
\[
\ec^{(0,\bullet)}_{Y} \simeq i^{-1}(\ec^{(0,\bullet)}_{X} \big/ \mathrm{Ker}\big( \ec^{(0,\bullet)}_{X} \to i_{Y_{\mathrm{reg}},*}\ec^{(0,\bullet)}_{Y_{\mathrm{reg}}}\big)),
\]
where $i_{Y_{\mathrm{reg}}}$ denotes the inclusion of $Y_{\mathrm{reg}}$ into $X$.
In particular, a smooth form on $Y$ is uniquely determined by its restriction to the regular locus $Y_{\mathrm{reg}}$.
When $Y$ is non-reduced, it is unclear whether a smooth form on $Y$ is uniquely determined by its restriction to the regular locus of $Y$.
\end{remark}
We now describe representatives of the generalized higher-degree Ueda obstruction classes given by the Chern curvature.
\begin{remark}
\label{curv-rep}
Assume that $Y$ is a closed analytic subvariety of pure dimension in a complex manifold $X$.
Fix $j \ge 1$. Since $i^{-1}\Omega^1_X$ is a locally free $i^{-1}\cO_X$-module, Theorem \ref{AL} implies that the bounded complex
\[
i^{-1}\Omega^1_X \otimes_{i^{-1}\cO_X} \ec^{0,\bullet}_{Y^{(j)}}
\]
is a fine resolution of
\[
i^{-1}\Omega^1_X \otimes_{i^{-1}\cO_X} \cO_{Y^{(j)}} = \iota_j^*\Omega^1_X,
\]
since $\ec^{0,\bullet}_{Y^{(j)}}$ is a fine resolution of $\cO_{Y^{(j)}}$.

Let $h$ be a smooth metric on a holomorphic line bundle $L$ on $X$. As in Remark \ref{rem1}, we obtain a Chern curvature representative of generalized higher-degree Ueda obstruction classes as follows. The collection of stalks $\{\Theta(L,h)(z)\}_{z \in Y}$ glues to a global section in
$
\Gamma\bigl(Y, i^{-1}\ec^{1,1}_X\bigr),
$
which represents the image of $c_1(L) \in H^1(X,\Omega^1_X)$ in $H^1\bigl(Y, i^{-1}\Omega^1_X\bigr)$, as in the proof of Corollary \ref{koi}.

There is a natural identification
\[
i^{-1}\ec^{1,\bullet}_X \simeq i^{-1}\Omega^1_X \otimes_{i^{-1}\cO_X} i^{-1}\ec^{0,\bullet}_X.
\]
Via this identification, the canonical morphism constructed in Theorem \ref{AL}
\[
i^{-1}\ec^{0,\bullet}_X \longrightarrow \ec^{0,\bullet}_{Y^{(j)}}
\]
induces a morphism of complexes
\[
i^{-1}\ec^{1,\bullet}_X \longrightarrow i^{-1}\Omega^1_X \otimes_{i^{-1}\cO_X} \ec^{0,\bullet}_{Y^{(j)}},
\]
which is by Theorem \ref{AL} quasi-isomorphic to the natural morphism
\[
i^{-1}\Omega^1_X \longrightarrow \iota_j^*\Omega^1_X.
\]
Via this resolution, the image of $\Theta(L,h)$ represents the higher-degree Ueda obstruction classes in
$
H^1\bigl(Y^{(j)}, \iota_j^*\Omega^1_X\bigr).
$
\end{remark}
We now compare Definition \ref{ueda} with Koike's definition given in \cite[Section 3.2]{Koi21}.
\begin{remark}
\label{coincide}
Our definition \ref{ueda}
is compatible with Koike’s definition \cite[Section 3.2]{Koi21} for a holomorphic line bundle \(L\) on a complex manifold \(X\) containing a smooth hypersurface \(Y\) such that \(L|_Y\) is unitary flat, via the natural morphisms. 
More precisely, consider
for any \(n \geq 2\),
\[
H^1(Y, N^{-n}_{Y/X}) \to H^1(Y, i^* \Omega^1_X \otimes N^{-(n-1)}_{Y/X}) \to H^1\big(Y, i^{-1} \Omega^1_X \otimes_{i^{-1} \mathcal{O}_X} \mathcal{O}_X / \mathcal{I}_Y^n \big),
\]
where \(\mathcal{I}_Y\) denotes the ideal sheaf of \(Y\).

The first morphism is induced by the inclusion \(N^{-1}_{Y/X} \hookrightarrow i^*\Omega^1_X\), and the second by the canonical identification and the natural inclusion
\[
N^{-(n-1)}_{Y/X})\simeq (\cI_Y/\cI_Y^2)^{\otimes (n-1)} \xrightarrow{\ \sim\ } \cI_Y^{n-1}/\cI_Y^n \hookrightarrow \mathcal{O}_X / \mathcal{I}_Y^n,
\]
where the canonical identification is given by multiplication in the ideal.

We now explain the compatibility. Adopting the notation and definition of the obstruction classes of \cite[Section 3.2]{Koi21}, take a sufficiently fine finite open covering \(\{U_j\}\) of \(Y\) and sufficiently small Stein open subsets \(V_j \subset X\) such that \(V_j \cap Y = U_j\).
Let $w_j \colon V_j\to \mathbb{C}$ be local defining functions of $Y$, 
and 
$z_j=(z_j^1, z_j^2, \dots, z_j^d)$ be coordinates of $U_j$. 
Take a local frame $e_j$ of $L$ on each $V_j$. 
As $L|_Y$ admits a structure as unitary flat line bundle, one can take $e_j$'s such that $t_{jk}^{-1}\cdot e_k|_{U_{jk}}=e_j|_{U_{jk}}$ holds for some $t_{jk}\in\mathrm{U}(1)$ on each $U_{jk}:=U_j\cap U_k$. 

Then the transition functions admit an expansion
\[
t_{jk}^{-1} \frac{e_k}{e_j} = 1 + \sum_{\alpha \geq 1} f_{kj,\alpha}(z_j) \, w_j^\alpha.
\]
If the system \(\{(V_j, e_j, w_j)\}\) is of type \(n\), namely \(f_{kj,\alpha} \equiv 0\) for all \(\alpha < n\), the class defined in \cite{Koi21} is
\[
\left[\left\{(U_{jk}, f_{kj,n}(z_j)\cdot (dw_j)^{\otimes n})\right\}\right]
\in \check{H}^1(\{U_j\}, \mathcal{O}_Y(N^{-n}_{Y/X}))
\]
denoted by $u_n(Y, X, L; \{(V_j, e_j, w_j)\})$.

Let \(h\) be a smooth Hermitian metric on \(L\), and let \(\varphi_j\) be the local weight functions on \(V_j\). 
As $|t_{jk}|=1$, one has that 
\[
\varphi_k(z_k, w_k)
 = -\log |e_k|_{h(z_k, w_k)}^2
 = -\log |e_j|_{h(z_j, w_j)}^2-\log \left|1+\sum_{|\alpha|\geq 1} f_{kj, \alpha}(z_j)\cdot w_j^\alpha\right|^2
\]
if $(z_j, w_j)=(z_k, w_k)$. 
By considering Taylor expansion of the function $x\mapsto \log (1+x)$, one has that 
\begin{equation}\label{eq:diff_phi}
\varphi_k-\varphi_j = 
- f_{kj,n}(z_j)\cdot w_j^n
- \overline{f_{kj, n}(z_j)}\cdot \overline{w_j^n}
+O(|w_j|^{n+1})
\end{equation}
holds if $\{(V_j, e_j, w_j)\}$ is a system of type $n$.

Taking \(\partial\) of (\ref{eq:diff_phi}), we obtain
\begin{equation}\label{eq:diff_phi2}
\partial\varphi_k - \partial\varphi_j = -n f_{kj,n}(z_j)\, w_j^{n-1} dw_j + O(|w_j|^n).
\end{equation}
We claim that the image of $u_n(Y, X, L; \{(V_j, e_j, w_j)\})$ in $H^1\big(Y, i^{-1} \Omega^1_X \otimes_{i^{-1} \mathcal{O}_X} \mathcal{O}_X / \mathcal{I}_Y^n \big)$
is $u_n(Y,X,L)$ for $n \geq 2$.

First, 
the derivation 
$
d : \mathcal{O}_X \to \Omega_X^1
$
induces a morphism
\[
\mathcal{I}_Y \to i^* \Omega_X^1.
\]
Since \(d(\mathcal{I}_Y^2)|_Y = 0\), this factors through
\[
d : \mathcal{I}_Y / \mathcal{I}_Y^2 \to i^* \Omega_X^1.
\]
This map is injective and identifies \(\mathcal{I}_Y / \mathcal{I}_Y^2\) with the conormal bundle \(N^{-1}_{Y/X}\).
Locally, if \(w_j\) is a defining function of \(Y\), i.e. \(\mathcal{I}_Y = (w_j)\), then
\[
\mathcal{I}_Y / \mathcal{I}_Y^2 = \mathcal{O}_Y \cdot [w_j],
\quad
[w_j] \mapsto dw_j.
\]

For any \(n \geq 2\), there is a canonical identification
$
(N_{Y/X})^{-(n-1)} \cong \mathcal{I}_Y^{n-1} / \mathcal{I}_Y^{n},
$
under which the tensor power corresponds to multiplication in the ideal. Locally, this is given by
\[
(dw_j)^{\otimes (n-1)} \longmapsto [w_j^{n-1}].
\]
Via this identification,
the image of $u_n(Y, X, L; \{(V_j, e_j, w_j)\})$ in 
$$H^1(Y, i^* \Omega^1_X \otimes N^{-(n-1)}_{Y/X})\simeq H^1(Y, i^* \Omega^1_X \otimes \cI_Y^{n-1}/\cI_Y^{n}) \simeq H^1(Y, i^{-1} \Omega^1_X \otimes_{i^{-1} \cO_X} \cI_Y^{n-1}/\cI_Y^{n})$$
is 
\[
\left[\left\{\left(U_{jk},\ f_{kj, n}(z_j) w_j^{n-1}\cdot (dw_j) \right)\right\}\right]
\in H^1(Y, i^{-1} \Omega^1_X \otimes_{i^{-1} \cO_X} \cI_Y^{n-1}/\cI_Y^{n}).
\]
Via the natural inclusion \(\cI_Y^{n-1}/\cI_Y^n \hookrightarrow \mathcal{O}_X/\cI_Y^n\), this defines a class in
\[
H^1\bigl(Y, i^{-1}\Omega^1_X \otimes_{i^{-1}\mathcal{O}_X} \mathcal{O}_X/\cI_Y^n\bigr),
\]
given by the same {\v C}ech cocycle.
%In particular, its image in $H^1\big(Y, i^{-1} \Omega^1_X \otimes_{i^{-1} \mathcal{O}_X} \mathcal{O}_X / \mathcal{I}_Y^n \big)$
%is given by
%the same {\v C}ech cocycle.

Thus by Example \ref{ex1} and (\ref{eq:diff_phi2}), the above {\v C}ech cocycle coincides, up to a universal constant, with the {\v C}ech differential of \(\{\partial\varphi_j\}\), viewed as a \(0\)-cochain with values in
$$i^{-1} \Omega^1_X \otimes_{i^{-1} \mathcal{O}_X} \ec_{Y^{(n)}}^{0,0}$$ 
%is (up to a universal constant) the  {\v C}ech differential of
%$$\left\{\left(U_{j},\partial \varphi_j \right) \right\}$$
Passing to the Dolbeault resolution in Theorem \ref{AL} and applying \(\bar{\partial}\), this corresponds to the image of the Chern curvature under the morphism
$$i^{-1} \ec_{X}^{1,1} \to i^{-1} \Omega^1_X \otimes_{i^{-1} \mathcal{O}_X} \ec_{Y^{(n)}}^{0,1}$$
which represents \(u_n(Y,X,L)\). This shows the compatibility of the two constructions.
\end{remark}

%We will need the following notions of positivity on singular spaces (see \cite[Definition 4.6.2]{BEG13}).
%\begin{definition}
%Let $Y$ be a normal complex analytic space.  
%The Bott--Chern cohomology of $Y$ is defined by
%\[
%H^{1,1}_{\mathrm{BC}}(Y)
%:= H^{2}(Y, \R \cO_Y).
%\]
%Equivalently, one may use currents:
%\[
%H^{1,1}_{\mathrm{BC}}(Y)
%:= \frac{\{ T \in \cD'^{(1,1)}(Y) \mid d T = 0 \}}
%{\{ \sqrt{-1}\,\partial\bar{\partial} S \mid S \in \cD'^{(0,0)}(Y) \}}.
%\]
%Let $L$ be a holomorphic line bundle on $Y$.
%A singular Hermitian metric $h$ on $L$ is given by the following data: for any open set $U \subset Y$ and any local holomorphic frame $e$ of $L$ on $U$, there exists a function $\varphi \in L^1_{\mathrm{loc}}(U \cap Y_{\mathrm{reg}})$ such that
%$
%|e|_h^2 = e^{-\varphi},
%$
%where $\varphi$ is allowed to take the value $-\infty$. 
% One says that $L$ is pseudo-effective if there exists a (possibly singular) Hermitian metric $h$ on $L$ such that its curvature current
%\[
%\Theta_h(L) = -\partial\bar{\partial} \log h
%\]
%is a closed semi-positive $(1,1)$-current on $Y$. 
%\end{definition}
Note that the class $u_n(Y, X, L; \{(V_j, e_j, w_j)\})$ may depend on the choice of $\{(V_j, e_j, w_j)\}$ (see \cite[Lemma 3.1]{Koi21}).

One may propose the following variant definition of the Ueda type.

\begin{definition}
Let $Y$ be a reduced analytic subvariety of a complex manifold $X$, and let $\cI_Y$ be its ideal sheaf. For $i \geq 1$, consider the $i$-th infinitesimal neighborhood of $Y$ in $X$, defined by
\[
(Y^{(i)}, \cO_{Y^{(i)}}) := (Y, \cO_X / \cI_Y^i).
\]
Let $L$ be a holomorphic line bundle on $X$.
We define the generalized Ueda type of $L$ along $Y$ by
\[
\inf \{ k \geq 1 \, ; \, u_k(Y,X,L) \neq 0 \}.
\]
\end{definition}
Note that if $u_k(Y,X,L) \neq 0$, then $u_l(Y,X,L) \neq 0$ for any $l \geq k$.
\begin{remark}
By Remark \ref{coincide}, for the holomorphic line bundle $\cO(Y)$ on a complex manifold $X$ containing a smooth hypersurface $Y$ such that $\cO(Y)|_Y$ is unitary flat, the generalized Ueda type is greater than or equal to the Ueda type. 
For $n \geq 2$, although there exists a natural morphism
\[
H^1(Y, N^{-n}_{Y/X}) \to H^1\big(Y, i^{-1} \Omega^1_X \otimes_{i^{-1} \mathcal{O}_X} \mathcal{O}_X / \mathcal{I}_Y^n \big),
\]
this morphism is given as the composition of two natural maps, each of which arises from a long exact sequence, while the composition itself does not. For this reason, the converse direction remains unclear.
\end{remark}
We have the following variant of \cite[Theorem 3.7]{Koi21} using Remark \ref{curv-rep}.
Compared with \cite[Theorem 3.7]{Koi21}, the assumption that $Y$ is K\"ahler can be avoided.
The case for general line bundle $L$ with $c_1(N^{-1}_{Y/X})=0 \in H^{1,1}_{\mathrm{BC}}(Y)$
is also not discussed in \cite[Theorem 3.7]{Koi21}.
When $N^{-1}_{Y/X}$ is not pseudo-effective, the conclusion follows directly from \cite[Theorem 3.7]{Koi21} by Remark \ref{coincide}.
 \begin{corollary}
\label{2nd-ueda}
 Let \(X\) be a complex manifold, \(Y \subset X\) a compact smooth hypersurface, and \(L|_Y\) a unitary flat line bundle on \(Y\).
Assume that $N^{-1}_{Y/X}$ is not pseudo-effective or $c_1(N^{-1}_{Y/X})=0 \in H^{1,1}_{\mathrm{BC}}(Y)$.
 If \(L\) is  semi-positive,
$$u_1(Y,X ,L)=u_2(Y,X ,L)=0.$$ 
  \end{corollary}
\begin{proof}
By Corollary \ref{koi}, we have
\[
u_1(Y,X,L)=0.
\]
Thus, we shall focus on the study of \(u_2(Y,X,L)\).
We use the same notation as in the proof of Corollary \ref{koi}.

Consider a point $z \in Y$. Choose local holomorphic coordinates $(h,v)$ on $X$ such that $Y$ is locally defined by $\{v=0\}$. In these coordinates, the Chern curvature $\Theta(L,h)(z)$ can be written in block form as
\[
\begin{pmatrix}
0 & A \\
\overline{A}^t & *
\end{pmatrix},
\]
where $A(h,0)=0$.

The key observation of \cite[Theorem 3.7]{Koi21} is that the lower-right block also vanishes under the assumption that $N^{-1}_{Y/X}$ is not pseudo-effective. In particular, the derivative of this term with respect to the horizontal coordinates $h$ coincides, up to sign, with the derivative of $A$ with respect to the vertical coordinate $v$. Since the lower-right block vanishes, this derivative vanishes, which represents the generalized second Ueda obstruction class.
(When $N^{-1}_{Y/X}$ is unitary flat, it seems to be unclear that the lower-right block vanishes. For our purpose, this is enough to conclude that the lower-right block is pluriharmonic and conclude with similar calculation in \cite[Proposition 3.6]{Koi21}. We refer the end of the proof for more details.
However, if the lower-right block is nonzero, the Chern curvature representative has a nonzero zeroth-order term in its Taylor expansion in the normal $v-$direction, which prevents a direct generalization to higher-degree Ueda obstruction classes by induction on the order of the Taylor expansion.
In \cite[Section 3.5]{Koi21}, this is proved by a suitable choice of local trivializations and local coordinates in the case where the line bundle $L=\cO(Y)$.
)

Let us present the argument in detail within our notation and assumptions.

Let $\varphi$ be a local potential of $\Theta(L,h)$ near $z$.
The lower-right block corresponds to $\varphi_{v \overline{v}}|_Y$.

The Taylor expansion of $\varphi$ up to order $2$ in the $v$-direction is given by
\[
\varphi(h,v)
= \varphi^{(0,0)}
+ \varphi^{(1,0)} v
+ \varphi^{(0,1)} \overline{v}
+ \varphi^{(2,0)} v^2
+ \varphi^{(1,1)} |v|^2
+ \varphi^{(0,2)} \overline{v}^2
+ O(|v|^3)
\]
where $\varphi^{(\bullet,\bullet)}$ is independent of $v$.
Note that
\[
(\varphi)_{v \overline{v}}
= \varphi^{(1,1)}
+ O(|v|).
\]
Since $u_1(Y,X,L)=0$ is represented by the Chern curvature as in the proof of Corollary \ref{koi}, we have for any $a,b$,
\[
(\varphi^{(0,0)})_{h^a \overline{h}^b}=0, \quad
(\varphi^{(1,0)})_{\overline{h}^b}=0, \quad
(\varphi^{(0,1)})_{h^a}=0.
\]
It follows that
\[
(\varphi)_{h^a \overline{h}^b}
= (\varphi^{(2,0)})_{h^a \overline{h}^b}\cdot v^2
+ (\varphi^{(1,1)})_{h^a \overline{h}^b}\cdot |v|^2
+ (\varphi^{(0,2)})_{h^a \overline{h}^b}\cdot \overline{v}^2
+ O(|v|^3),
\]
\[
(\varphi)_{h^a \overline{v}}
= (\varphi^{(1,1)})_{h^a }\cdot v
+ 2(\varphi^{(0,2)})_{h^a }\cdot \overline{v}
+ O(|v|^2),
\]
\[
(\varphi)_{v \overline{h}^b}
= 2(\varphi^{(2,0)})_{ \overline{h}^b}\cdot v
+ (\varphi^{(1,1)})_{ \overline{h}^b}\cdot \overline{v}
+ O(|v|^2).
\]
Since $L$ is semi-positive, the matrix $\bigl( (\varphi)_{h^a \overline{h}^b} \bigr)_{a}^b$ is semi-positive definite.
By \cite[Lemma 2.1]{Koi21} (see also the proof of \cite[Lemma 3.5]{Koi21}), it follows that $\varphi^{(1,1)}$ is psh.

We emphasize that, unlike in \cite[Lemma 3.5]{Koi21}, we do not choose a special trivialisation of the line bundle $L$ near $z$.

A direct calculation as in \cite[Proposition 3.6]{Koi21} shows that, if $\varphi^{(1,1)} \not\equiv 0$, then $\log \varphi^{(1,1)}$ is psh.
We emphasize that the plurisubharmonicity relies essentially on the fact that $Y$ is a hypersurface of $X$.

Note that, since $Y$ is a smooth hypersurface of $X$, the regular locus of $Y^{(2)}$ coincides with $Y$, as the Cohen--Macaulay condition is automatically satisfied.
We claim that if $\varphi^{(1,1)} \not\equiv 0$ near some point of $Y$, then $\varphi^{(1,1)}$ glues to define a singular Hermitian metric on $N_{Y/X}$, which is semi-negative by the plurisubharmonicity of $\log \varphi^{(1,1)}$.
More precisely, in our situation, instead of \cite[(3.1)]{Koi21} which depends on a choice of local trivialization, the fact that the pointwise $(1,1)$-form $\varphi^{(1,1)}\, dv \wedge d \overline v$ glues to a singular metric on $N_{Y/X}$ follows from the observation that, for each fixed point $x$, the form $\varphi^{(1,1)}\, dv \wedge d \overline v$ is obtained as the image of the Chern curvature under the natural morphism
\[
\ec^{1,1}_{X,x} \longrightarrow \ec^{1,1}_{X,x} \otimes_{\mathcal{C}^{\infty}_{X,x}} \C,
\]
that is, by passing to the fiber at $x$, where $\C$ is regarded as a $\mathcal{C}^{\infty}_{X,x}$-module via the evaluation map $\mathrm{ev}_x$.
Note that the fact that the Chern curvature takes values in $N_{Y/X}^{-1} \otimes_{\mathcal{C}^{\infty}_{Y}} \overline{N_{Y/X}^{-1}}$ follows from the vanishing of the Chern curvature representative of the generalized Ueda first obstruction class.
This finishes the proof of our claim.

We complete the proof by distinguishing cases according to the positivity properties of the conormal bundle.

If $N_{Y/X}^{-1}$ is not pseudo-effective, the above claim leads to a contradiction, implying that $\varphi^{(1,1)} \equiv 0$.

By Remark \ref{curv-rep}, the generalized second Ueda obstruction class \(u_2(Y,X,L)\)
is represented by the image of the Chern curvature in \(i^{-1}\Omega_X^1 \otimes \ec^{0,1}_{Y^{(2)}}\); in local coordinates, this corresponds to the component of $(\varphi)_{v \overline{h}^b}$ of order one in the normal direction.
Note that by \cite[Lemma 2.1]{Koi21}, the component of $(\varphi)_{h^a \overline{h}^b}$ of order one in the normal direction vanishes.

Thus the generalized second Ueda obstruction class \(u_2(Y,X,L)\)
 is locally represented near $z$ by $\sum_b 2 (\varphi^{(2,0)})_{\overline{h}^b}v d\overline{h}^b $. This vanishes, as follows from the proof of \cite[Proposition 3.6, line 5, p.~2257]{Koi21}.
Since this represents the generalized second Ueda obstruction class,
$$u_2(Y,X ,L)=0.$$ 
This completes the proof in the case where $N_{Y/X}^{-1}$ is not pseudo-effective.

Assume that $c_1(N^{-1}_{Y/X})=0 \in H^{1,1}_{\mathrm{BC}}(Y)$ and that $\varphi^{(1,1)} \not\equiv 0$ near some point. Then
\[
\partial\bar{\partial} \log \varphi^{(1,1)} \equiv 0,
\]
since it is the unique semi-positive $(1,1)$-current in the class $c_1(N^{-1}_{Y/X})$.
By the last inequality in the proof of \cite[Proposition 3.6, line 5, p.~2257]{Koi21}, we obtain
\[
\sum_b 2 (\varphi^{(2,0)})_{\overline{h}^b} \, d\overline{h}^b \equiv 0,
\]
which concludes the proof in this case.
\end{proof}
\begin{remark}
Note that the same arguments as in Corollary \ref{2nd-ueda} also apply at a smooth point of $Y$ even when $Y$ is not smooth. However, as noted in Remark \ref{non-reduced-diff}, it seems difficult to conclude that the generalized second Ueda class is trivial.
\end{remark} 
\begin{remark}
Assume that $N^{-1}_{Y/X}$ is not pseudo-effective. Then one can slightly strengthen the conclusion to
\[
u_3(Y,X,L)=0.
\]
We give the proof using the same notation as in Corollary \ref{2nd-ueda}.

Let $\varphi$ be a local potential of $\Theta(L,h)$ near $z$.
The Taylor expansion of $\varphi$ up to order $3$ in the $v$-direction is
\[
\varphi(h,v)
=\sum_{p,q \in \mathbb{N},\, p+q \leq 3} \varphi^{(p,q)} v^p \overline{v}^q
+ O(|v|^4),
\]
where each $\varphi^{(p,q)}$ is independent of $v$.

By the proof of Corollary \ref{2nd-ueda}, we have for any $a,b$,
\[
(\varphi)_{v \overline{v}}
=2 \varphi^{(2,1)} v+2\varphi^{(1,2)}\overline{v}
+ O(|v|^2),
\]
\[
(\varphi)_{h^a \overline{h}^b}
= (\varphi^{(3,0)})_{h^a \overline{h}^b}\, v^3
+ (\varphi^{(2,1)})_{h^a \overline{h}^b}\, v^2 \overline{v}
+ (\varphi^{(1,2)})_{h^a \overline{h}^b}\, v \overline{v}^2
+ (\varphi^{(0,3)})_{h^a \overline{h}^b}\, \overline{v}^3
+ O(|v|^4),
\]
\[
(\varphi)_{h^a \overline{v}}
= (\varphi^{(2,1)})_{h^a }\, v^2
+2(\varphi^{(1,2)})_{h^a }\, v \overline{v}
+ 3(\varphi^{(0,3)})_{h^a }\, \overline{v}^2
+ O(|v|^3),
\]
\[
(\varphi)_{v \overline{h}^b}
= 3(\varphi^{(3,0)})_{ \overline{h}^b}\, v^2
+2 (\varphi^{(2,1)})_{ \overline{h}^b}\, v\overline{v}
+ (\varphi^{(1,2)})_{ \overline{h}^b}\, \overline{v}^2
+ O(|v|^3).
\]

By \cite[Lemma 2.1]{Koi21}, the condition $(\varphi)_{v \overline{v}} \geq 0$ implies
\[
\varphi^{(2,1)} = \varphi^{(1,2)} = 0.
\]

Fix $a$. The restriction of the complex Hessian of $\varphi$ to the complex plane spanned by $\frac{\partial}{\partial v}$ and $\frac{\partial}{\partial h^a}$ is given by
\[
\begin{pmatrix}
(\varphi)_{v \overline{v}} & (\varphi)_{v \overline{h^a}}\\
(\varphi)_{h^a \overline{v}} & (\varphi)_{h^a \overline{h^a}}
\end{pmatrix},
\]
which is semi-positive by assumption.

The Taylor expansion of the determinant of this matrix starts with the term
\[
-9\,\big|(\varphi^{(3,0)})_{\overline{h}^a}\big|^2\, |v|^4,
\]
which implies
\[
(\varphi^{(3,0)})_{\overline{h}^a}=(\varphi^{(0,3)})_{h^a}=0.
\]

On the other hand, the generalized third Ueda obstruction class $u_3(Y,X,L)$ is locally represented near $z$ by
\[
\sum_b 3 (\varphi^{(3,0)})_{\overline{h}^b} \, v^2 d\overline{h}^b.
\]
Since $(\varphi^{(3,0)})_{\overline{h}^b}=0$, we conclude that
\[
u_3(Y,X,L)=0.
\]
\end{remark}
To conclude this section, for the convenience of the reader, we recall the following example from \cite[p.\,2262]{Koi21}, which shows that $\varphi^{(1,1)}$ need not vanish.
(This result should be compared with \cite[Theorem 1.2]{Koi22}, where it is shown that the existence of a semi-positive metric on \(\cO(Y)\) forces the Ueda type to be infinite, although \(\varphi^{(1,1)}\) need not vanish.)
\begin{example}
\label{ex2}
Let $X$ be a surface and let $Y \subset X$ be a non-singular compact curve holomorphically embedded in $X$ with topologically trivial normal bundle, such that the pair $(Y,X)$ is of type $(\beta')$ or $(\beta'')$ in the classification of \cite[Section 5]{Ued83}. That is, there exists an open covering $\{U_j\}$ of $Y$ and, for each $j$, a local defining function $w_j$ of $Y$ on a neighborhood $V_j$ of $U_j$ such that
\[
t_{jk} w_k = w_j \quad \text{on } V_j \cap V_k,
\]
where $t_{jk} \in \mathrm{U}(1)$.

Consider a $C^\infty$ Hermitian metric $h$ on $\cO(Y)$ whose local weight functions $\varphi_j$ on each $V_j$, with respect to the local frame $e_j$ corresponding to the meromorphic function $1/w_j$, satisfy
\[
\varphi_j = |w_j|^2.
\]
Then $\varphi^{(1,1)} = 1$, and in particular it does not vanish.
Moreover, the Taylor expansion of the corresponding Chern curvature shows that $u_k(Y,X,\cO(Y))=0$ for all $k \geq 1$.
\end{example}
  
Department of Mathematics, Graduate School of Science, Osaka Metropolitan University, 3-3-138 Sugimoto, Osaka 558-8585, Japan.\\
Email address: xiaojun.wu@univ-cotedazur.fr; y25161q@omu.ac.jp.  
  \end{document}